\newcommand{\TheTitle}{Regularization based on all-at-once formulations for inverse problems} 
\newcommand{\ShortTitle}{Regularization based on all-at-once formulations} 
\newcommand{\TheAuthors}{B. Kaltenbacher}
\headers{\ShortTitle}{\TheAuthors}
\title{{\TheTitle}\thanks{This work was supported by the Austrian Science Fund FWF under grant I2271.}}
\author{
  Barbara Kaltenbacher\thanks{Alpen-Adria-Universit\"at Klagenfurt, Austria
    (\email{barbara.kaltenbacher@aau.at}, \url{http://wwwu.uni-klu.ac.at/bkaltenb/}).}
}
\theoremstyle{remark}
\newtheorem{example}{Example}
\newtheorem{remark}{Remark}
\newtheorem{assumption}{Assumption}
\renewcommand{\phi}{\varphi}
\newcommand{\R}{\mathbb{R}}
\newcommand{\N}{\mathbb{N}}
\renewcommand{\bar}{\overline}
\newcommand{\calB}{\mathcal{B}}
\newcommand{\calC}{\mathcal{C}}
\newcommand{\calD}{\mathcal{D}}
\newcommand{\calR}{\mathcal{R}}
\newcommand{\calS}{\mathcal{S}}
\newcommand{\bfF}{\mathbf{F}}
\newcommand{\bfR}{\mathbf{R}}
\newcommand{\bfx}{\mathbf{x}}
\newcommand{\bfy}{\mathbf{y}}
\newcommand{\xdag}{x^\dagger}
\newcommand{\udag}{u^\dagger}
\newcommand{\ydel}{y^\delta}
\newcommand{\xad}{x_\alpha^\delta}
\newcommand{\uad}{u_\alpha^\delta}
\newcommand{\wad}{w_\alpha^\delta}
\newcommand{\xadd}{x_{\alpha(\delta)}^\delta}
\newcommand{\uadd}{u_{\alpha(\delta)}^\delta}
\newcommand{\xadb}{\bar{x}_\alpha^\delta}
\newcommand{\uadb}{\bar{u}_\alpha^\delta}
\newcommand{\wadb}{\bar{w}_\alpha^\delta}
\newcommand{\xadbd}{\bar{x}_{\alpha(\delta)}^\delta}
\newcommand{\uadbd}{\bar{u}_{\alpha(\delta)}^\delta}
\newcommand{\xkd}{x_k^\delta}
\newcommand{\ukd}{u_k^\delta}
\newcommand{\ukdb}{\bar{u}_k^\delta}
\newcommand{\wkdb}{\bar{w}_k^\delta}
\newcommand{\zkdb}{\bar{z}_k^\delta}
\newcommand{\xkpd}{x_{k+1}^\delta}
\newcommand{\ukpd}{u_{k+1}^\delta}
\newcommand{\wkpd}{w_{k+1}^\delta}
\newcommand{\xkpdb}{\bar{x}_{k+1}^\delta}
\newcommand{\ukpdb}{\bar{u}_{k+1}^\delta}
\newcommand{\wkpdb}{\bar{w}_{k+1}^\delta}
\newcommand{\xskd}{x_k^{*\delta}}
\newcommand{\uskd}{u_k^{*\delta}}
\newcommand{\xkdb}{\bar{x}_k^{\delta}}
\newcommand{\xskdb}{\bar{x}_k^{*\delta}}
\newcommand{\xskpd}{x_{k+1}^{*\delta}}
\newcommand{\uskpd}{u_{k+1}^{*\delta}}
\newcommand{\xskpdb}{\bar{x}_{k+1}^{*\delta}}
\newcommand{\vobs}{v^{\text{\footnotesize{obs}}}}
\newcommand{\vmod}{v^{\text{\footnotesize{mod}}}}
\newcommand{\yobs}{y^{\text{\footnotesize{obs}}}}
\newcommand{\ymod}{y^{\text{\footnotesize{mod}}}}
\newcommand{\norm}[1]{\left\|#1\right\|}
\newcommand{\ul}[1]{\underline{#1}}
\newcommand{\ol}[1]{\overline{#1}}
\newcommand{\setof}[2]{\left\{#1:#2\right\}}
\begin{document}

\maketitle

\begin{abstract}
Parameter identification problems typically consist of a model equation, e.g. a (system of) ordinary or partial differential equation(s), and the observation equation. In the conventional reduced setting, the model equation is eliminated via the parameter-to-state map. Alternatively, one might consider both sets of equations (model and observations) as one large system, to which some regularization method is applied. The choice of the formulation (reduced or all-at-once) can make a large difference computationally, depending on which regularization method is used: Whereas almost the same optimality system arises for the reduced and the all-at-once Tikhonov method, the situation is different for iterative methods, especially in the context of nonlinear models. In this paper we will exemplarily provide some convergence results for all-at-once versions of variational, Newton type and gradient based regularization methods. Moreover we will compare the implementation requirements for the respective all-at-one and reduced versions and provide some numerical comparison.
\end{abstract}

\begin{keywords}
inverse problems, regularization, all-at-once formulations
\end{keywords}

\begin{AMS}
65M32, 65J22, 35R30
\end{AMS}

\section{Introduction}
In their original formulation, inverse problems often consist of a model and additional observations. Consider, e.g., an equation (PDE, ODE, integral equation) model for the state $u$ 
\begin{equation}\label{Axu}
A(x,u)=0
\end{equation}
containing a parameter $x$ (or a set of parameters) that is to be determined from additional observations of the state
\begin{equation}\label{Cuy}
C(u)=y
\end{equation}
Here $A:\calD(A)(\subseteq X\times V)\to W^*$ and $C:\calD(C)(\subseteq V)\to Y$ are operators acting between Banach spaces $X$, $V$, $W^*$, $Y$ (the star indicates that in variational formulations of models $W^*$ will typically be the dual of some Banach space).
The setting could be extended in several directions, e.g., the observation can as well depend on some unknown parameters that have to be identified or the model could consist of a variational inequality instead of an equation. 
Still \eqref{Axu}, \eqref{Cuy} is sufficiently general to comprise a wide range of applications, e.g., the following examples.

\subsection{Examples}
\begin{example} \label{ex1}
Consider a boundary value problem for a linear elliptic PDE on a smooth bounded domain $\Omega\subseteq\R^d$, $d\in\{1,2,3\}$  
\[
-\nabla (a\nabla u)+cu=b \mbox{ in }\Omega\,, \quad \frac{\partial u}{\partial n}=g\mbox{ on }\partial\Omega
\]
with a given boundary excitation $g\in H^{-1/2}(\partial\Omega)$,
and possibly spatially varying coefficients $a,b,c$ and the inverse problem of identifying these coefficients $a,b,c$ (or part of them) from additional measurements $C(u)$ of the PDE solution $u$ on (part of) the domain or on its boundary. This fits into the above framework with, e.g., $X=W^{1,p}(\Omega)\times L^\infty(\Omega)\times L^p(\Omega)$, $p\in[1,\infty]$, $V=H^1(\Omega)=W$, (or, if $\Omega$, $g$  are sufficiently smooth, 
$V=W^{2,p}(\Omega)$, $W^*=L^p(\Omega)$, $p\in[1,\infty]$,) 
\[
\langle A(a,b,c,u),w\rangle_{W^*,W}=\int_\Omega \Bigl(a\nabla u \cdot \nabla w +cuw-bw\Bigr)\, dx-\int_{\partial\Omega} g w\, ds
\] 
and $Y=L^q(\Sigma)$, $p\in[1,\infty]$, $C(u)=u\vert_\Sigma$, where $\Sigma \subseteq\overline{\Omega}$ is an open subdomain of $\Omega$ or a regular curve/surface contained in its boundary or in its interior, so that an embedding or a trace theorem yields continuity of the observation map $C:H^1(\Omega)\to L^p(\Sigma)$.
\end{example}

\begin{example} \label{ex2}
Using similar measurements but a nonlinear model, we consider identification of the nonlinearity, i.e., the function $q$ in the elliptic boundary value problem
\[
-\Delta u+q(u)=0 \mbox{ in }\Omega\,, \quad \frac{\partial u}{\partial n}=g\mbox{ on }\partial\Omega
\]
with given $g$.
Here we use a space $X$ that is continuously embedded in $C[\underline{u},\overline{u}]$ for an interval  $[\underline{u},\overline{u}]$ containing all possibly appearing values of $u$ (which can, e.g., be estimated by using maximum principles in case the PDE is elliptic, depending on the monotonicity of $q$), 
$V=H^1(\Omega)=W$, 
\[
\langle A(q,u),w\rangle_{W^*,W}=\int_\Omega \Bigl(\nabla u \cdot \nabla w +q(u)w\Bigr)\, dx-\int_{\partial\Omega}gw\, ds
\] 
and $Y$, $C$ as in Example \ref{ex1} above.
\end{example}

\begin{example} \label{ex2b}
Alternatively, one often encounters inverse source problems for nonlinear PDEs such as the simple model example of identifying $b$ in 
\[
-\Delta u+\zeta u^3=b \mbox{ in }\Omega\,, \quad \frac{\partial u}{\partial n}=g\mbox{ on }\partial\Omega
\]
where $g$ and $\zeta$ are given.
Here we have $X=L^p(\Omega)$, $p\in[1,\infty]$, $V=H^1(\Omega)=W$, 
\[
\langle A(b,u),w\rangle_{W^*,W}=\int_\Omega \Bigl(\nabla u \cdot \nabla w +\zeta u^3 w-bw\Bigr)\, dx-\int_{\partial\Omega}gw\, ds\,,
\] 
and again $Y$, $C$ as in Example \ref{ex1} above.
\end{example}

\begin{example} \label{ex3}
Consider identification of the (possibly infinite dimensional) parameter $\vartheta$ in the state space system consisting of an ODE model and observations
\[
\begin{aligned}
&\dot{u}(t)=f(t,u(t),\vartheta)\ t\in(0,T)\,, \quad u(0)=u_0\\
&y=C(u)\,,
\end{aligned}
\]
where the dot denotes time differentiation, $f:(0,T)\times\R^n\times X\to\R^n$ is a given function and $u_0\in\R^n$ is a given initial value. Using semigroup theory, this could as well be extended to time dependent PDEs. An example of an infinite dimensional stationary parameter $\vartheta$ to be identified in a system of ODEs is the Preisach weight function in some hysteretic evolutionary model.
The observations $C(u)$ are, e.g. discrete or continuous in time 
$y_i=g_i(u(t_i))$, $i\in\{1,\ldots,m\}$ (including the case of final measurements $t_i=T$) or $y(t)=g(t,u(t))$, $t\in(0,T)$ with given functions $g_i:\R^n\to\R^{mM}$ or $g:(0,T)\times\R^n\to L^2(0,T;\R^M)$.
\end{example}

\subsection{Motivation of the all-at-once-approach}
Definition and analysis of solution methods for such inverse problems is often based on a reduced formulation that is obtained by the use of a parameter-to-state map i.e., a mapping $S:\calD(S)(\subseteq X)\to V$, that resolves \eqref{Axu} with respect to $x$
\begin{align*}
\forall x\in \calD(S): \ & A(x,S(x))=0 \text{ and } \\
&\forall u\in V: ((x,u)\in\calD(A) \text{ and } A(x,u)=0) \ \Rightarrow \ u=S(x).
\end{align*}
Existence of such a mapping is guaranteed by the Implicit Function Theorem if for an open set $\calB$ with $\calD(S)\subset\calB\subseteq X$, $A$ is continuously Fr\'{e}chet differentiable on $\calB\times V$ and its derivative $A_u$ with respect to the state is boundedly invertible with uniform bound: 
\begin{assumption}\label{ass:A_u}
\[
\exists C_A \ \forall (x,u)\in (\calB\times V) \cap \calD(A)\,: \ \norm{A_u(x,u)^{-1}}\leq C_A\,.
\]
\end{assumption}
In order to satisfy this assumption, usually the domain of $A$ has to be restricted, e.g., to 
\begin{equation}\label{domA:ex1}
\calD(A)\subseteq\setof{(x,u)=(a,b,c,u)}{\underline{a}\le a\le \overline{a} \mbox{ a.e. on }\Omega, \ c\geq\ul{c} \mbox{ a.e. on }\Omega}
\end{equation}
with positive constants $0<\ul{a}<\ol{a}$, $0<\ul{c}$ 
in example \ref{ex1} or to 
\begin{equation}\label{domA:ex2}
\calD(A)\subseteq\setof{(x,u)=(q,u)}{\ul{q}\le \frac{q(\tilde{\lambda})-q(\lambda)}{\tilde{\lambda}-\lambda}\ \forall \tilde{\lambda}\not=\lambda\in\R}
\end{equation}
with some constant $\ul{q}>0$ 
in example \ref{ex2}.

Under such conditions the forward operator $F:\calD(F)(\subseteq X)\to Y$, $F=C\circ S$ is well-defined on $\calD(F)=\calD(S)$ and the inverse problem \eqref{Axu}, \eqref{Cuy} can be equivalently written as an operator equation
\begin{equation}\label{Fxy}
F(x)=y
\end{equation}
Such problems are typically ill-posedness in the sense that $F$ is not continuously invertible
and instead of $y$ usually only a noisy version $\ydel$ is available, which we here assume to obey the deterministic and known noise bound $\delta$
\begin{equation}\label{delta}
\norm{y-\ydel}\leq\delta\,,
\end{equation}
thus regularization (see, e.g., \cite{BakKok04,EHNbuch96,KalNeuSch08,Kirs96,SGGHL08,SKHK12,Voge02} and the references therein) has to be employed.

In this paper we will return to the original formulation as an all-at-once system of model and observation equation \eqref{Axu}, \eqref{Cuy}, 
\begin{equation}\label{Fxuy}
\bfF(\bfx)=\bfF(x,u)=\left(\begin{array}{c}A(x,u)\\C(u)\end{array}\right)
=\left(\begin{array}{c}0\\y\end{array}\right)=\bfy
\end{equation}
and investigate the behaviour of some well-known regularization paradigms when applied to $\bfF$ instead of $F$. We will see that this enables to avoid restrictions like \eqref{domA:ex1}, \eqref{domA:ex2} and, moreover, can make a considerable difference when it comes to implementation. 
We will also provide a convergence analysis that goes beyond the mere application of known results to the operator $\bfF$ in the sense that regularization might not just be applied to the whole of $\bfx=(x,u)$ but - as natural - only to the $x$ part, a case which requires some extra considerations in the convergence analysis.

All-at-once approaches to inverse problems have already been considered previously, see, e.g., 
\cite{BurgerMuehlhuberIP,BurgerMuehlhuberSINUM,HaAs01,KKV14b,LeHe16}.
While these papers concentrate on computational aspects and convergence analysis of particular methods, our aim is here to provide a comparative overview on several regularization paradigms.

In the remainder of this paper we will assume that a solution 
\begin{equation}\label{defDbfF}
(\xdag,\udag)\in\calD(\bfF)\subseteq\calD(A)\cap(X\times\calD(C))
\end{equation}
to \eqref{Fxuy} exists and that $\calD(\bfF)$ is convex. Note that $\calD(\bfF)$ need not necessarily be the maximal domain of $\bfF$, so restriction to a convex set can be done without loss of generality here.

While data misfit and regularization terms will be defined by norms for simplicity of exposition, most of the results are extendable to more general discrepancy and regularization functionals as considered, e.g. in \cite{DissFlemming,DissPoeschl,DissWerner}. 

We treat methods and convergence conditions only exemplarily to highlight analogies and differences between reduced and all-at-once formulations, so our aim is not to provide a complete convergence analysis (a priori and a posteriori choice of regularization parameters, general rates, etc.) for each of the discussed methods. 

The remainder of this paper is organized as follows. In Sections \ref{sec:Tikh}, \ref{sec:IRGNM}, \ref{sec:LW}, we consider Tikhonov, Newton type, and gradient type regularization, respectively. For each of these three paradigms we provide some convergence results with a priori and a posteriori regularization parameter choice strategies and under different assumptions on the forward operator (This analysis part is restricted to just quotation of a convergence result in the gradient method case.) Moreover we compare the key implementation requirements.  Section \ref{sec:numex} contains a verification of the convergence conditions for the iteratively regularized Gauss-Newton method from Section \ref{sec:IRGNM} for Example \ref{ex2b}, whose all-at-once and reduced versions are then compared numerically. Some preliminary considerations on comparison of convergence conditions for all-at-once and reduced versions are provided in Section \ref{sec:rem}, and we give a short summary and an outlook in Section \ref{sec:concl}.

\paragraph{Notation}
For some $r\in[1,\infty)$ we denote by $r^*=\frac{r}{r-1}$ the dual index.
$J_r^X=\partial\frac{1}{r}\norm{\cdot}_X^r$ denotes the duality mapping with gauge function $\frac{1}{r}t^r$, which is in general set valued. For smooth spaces, i.e., spaces with G\^{a}teaux differentiable norm on the unit sphere, $J_r^X(x)$ will be single valued; otherwise, by a slight abuse of notation, we denote by $J_r^X(x)$ a single valued selection from this set.

\section{Tikhonov regularization}\label{sec:Tikh}

For any $\rho,\alpha>0$, $m,o,r\in[1,\infty)$, $x_0\in\calB$, $u_0\in V$, we define the pair $(\xad,\uad)$ as a minimizer of 
\begin{equation}\label{minTikh}
\min_{(x,u)\in\calD(\bfF)} \calS(\bfF(x,u),(0,\ydel))+\alpha \calR(x,u)
\end{equation}
with 
\begin{equation}\label{defS}
\calS((w^*,y),(\ymod,\yobs))=\frac{\rho}{m}\norm{w^*-\ymod}_{W^*}^m+\frac{1}{o}\norm{y-\yobs}_Y^o
\end{equation}
and 
\begin{equation}\label{Tikh_aao}
\calR(x,u)=\calR_1(x)=\frac{1}{r}\norm{x-x_0}_X^r
\end{equation}
or 
\begin{equation}\label{Tikh_aao_u}
\calR(x,u)=\calR_2(x,u)=\frac{1}{r}\norm{x-x_0}_V^r+\frac{1}{q}\norm{u-u_0}_V^q\,,
\end{equation}
Note that in case of \eqref{Tikh_aao} regularization is only applied to $x$.
In both versions, $\rho$ will remain fixed, whereas $\alpha$ will be chosen in dependence of $\delta$ - typically in such a way that it tends to zero as $\delta\to0$.

\subsection{Well-definedness and convergence}
The analysis will be based on the following assumptions 
\begin{assumption} \label{ass:compactXV}
Bounded sets in $X$, $V$ are weakly (or weakly*) compact.
\end{assumption}
\begin{assumption} \label{ass:bfFweaklyclosed}
$\bfF$ is weakly (or weakly*) sequentially closed, i.e.,
\[
\begin{aligned}
\forall ((x_k,u_k))_{k\in\N}\subseteq\calD(\bfF): \quad& \Bigl(x_k\rightharpoonup x\,, \ u_k\rightharpoonup u\,, \ A(x_k,u_k)\rightharpoonup f\,, C(u_k)\rightharpoonup y \Bigr) \\ 
&\Longrightarrow \  \Bigl( (x,u)\in\calD(\bfF)\text{ and } A(x,u)=f\,, \ C(u)=y\Bigr)\,,
\end{aligned}
\] 
\end{assumption}
where $\rightharpoonup$ denotes weak or weak* convergence.

For only proving convergence provided minimizers are already well-defined the following somewhat weaker assumption (note that we have strong convergence of the images in the premiss) suffices in place of Assumption \ref{ass:bfFweaklyclosed}.
\begin{assumption} \label{ass:bfFweakstrongclosed}
\[
\begin{aligned}
\forall ((x_k,u_k))_{k\in\N}\subseteq\calD(\bfF): \quad& \Bigl(x_k\rightharpoonup x\,, \ u_k\rightharpoonup u\,, \ A(x_k,u_k)\to f\,, C(u_k)\to y \Bigr) \\ 
&\Longrightarrow \  \Bigl( (x,u)\in\calD(\bfF)\text{ and } A(x,u)=f\,, \ C(u)=y\Bigr)
\end{aligned}
\] 
\end{assumption}

Assumption \ref{ass:compactXV} is satisfied if $X,V$ are reflexive or duals of separable normed spaces. (The star in weak* will be skipped in the following.)
Sufficient for Assumption \ref{ass:bfFweaklyclosed} is weak continuity of $A$ and $C$ and weak closedness of $\calD(\bfF)$.

To prove well-definedness of a minimizer and convergence in case of regularization with respect to $x$ only, \eqref{Tikh_aao}, we additionally impose Fr\'{e}chet differentiability of $A$ and $C$, Assumption \ref{ass:A_u} and a growth condition on the derivative $A_x$ of the model operator with respect to the parameter: 
\begin{assumption}\label{ass:A_x}
There exists a (without loss of generality monotonically increasing) function $\psi:\R^+\to\R^+$ such that
\[
\begin{aligned}
&(a) \quad \forall (x,v)\in (\calB\times V) \cap \calD(A)\,: \ 
\norm{A_x(x,u)}\leq \psi\left(\norm{x}_X\right)\left(1+\norm{u}_V\right) \\
&\qquad\text{ and } \psi\left(\norm{\xdag}_X+\lambda\right)\lambda\to0 \text{ as }\lambda\to0\text{ and }\norm{\xdag-x_0}_X\text{ is sufficiently small}\\
&\text{or}\\
&(b) \quad \forall (x,v)\in (\calB\times V) \cap \calD(A)\,: \ 
\norm{A_x(x,u)}\leq \psi\left(\norm{x}_X\right)
\end{aligned}
\]
\end{assumption}
\begin{proposition}\label{th:convTikh}
Let Assumptions 
\ref{ass:compactXV} \ref{ass:bfFweaklyclosed}, and in case of \eqref{Tikh_aao} additionally Assumptions \ref{ass:A_u}, \ref{ass:A_x} be satisfied. 
\begin{itemize}
\item Then there exists $\bar{\delta}>0$ such that for all $\delta\in(0,\bar{\delta})$, $\alpha>0$ a minimizer of \eqref{minTikh} with \eqref{defS} and \eqref{Tikh_aao} or \eqref{Tikh_aao_u} exists.
\item
These minimizers are stable with respect to the data $\ydel$ in the sense that for any sequence $(y_k)_{k\in\N}$ converging to $\ydel$ in the $Y$ norm and any sequence of corresponding minimizers $((x_k,u_k))_{k\in\N}$ there exists a weakly convergent subsequence and the limit of every weakly convergent subsequence is a minimizer of \eqref{minTikh}.
\item If $\alpha=\alpha(\delta)$ is chosen a priori according to 
\begin{equation}\label{apriori}
\alpha\to0\text{ and }\frac{\delta^o}{\alpha}\to0\text{ as }\delta\to0
\end{equation}
or a posteriori according to the generalized discrepancy principle 
\begin{equation}\label{aposteriori}
\frac{\delta^o}{o}\leq 
\calS(\bfF(\xad,\uad),(0,\ydel))
\leq \frac{\tau\delta^o}{o}
\end{equation}
for some fixed $\tau>0$ independent of $\delta\in(0,\bar{\delta})$,
then there exists $\bar{C}>0$ independent of $\delta\in(0,\bar{\delta})$ such that for any solution $(\xdag,\udag)$ of \eqref{Axu}, \eqref{Cuy} we have boundedness
\begin{equation}\label{xaddxdag}
\begin{aligned}\forall\delta\in(0,\bar{\delta})\, : & \ 
\frac{1}{r}\norm{\xadd-x_0}_X^r
\leq \begin{cases}
\frac{1}{r}\norm{\xdag-x_0}_X^r +\frac{\delta^o}{o\alpha(\delta)}
\text{ with \eqref{apriori}}\\
\frac{1}{r}\norm{\xdag-x_0}_X^r \text{ with \eqref{aposteriori}}\end{cases}
\\& \text{ and }\norm{\uadd}_V\leq \bar{C}
\end{aligned}
\end{equation}
in case of minimizers of \eqref{minTikh} with \eqref{defS} and \eqref{Tikh_aao} and 
\begin{equation}\label{xaddxdag_u}
\forall\delta\in(0,\bar{\delta})\, : \ 
\calR_2(\xadd,\uadd)
\leq \begin{cases}
\calR_2(\xdag,\udag) +\frac{\delta^o}{\alpha(\delta)}
\text{ with \eqref{apriori}}\\
\calR_2(\xdag,\udag) \text{ with \eqref{aposteriori}}\end{cases}
\end{equation}
in case of minimizers of \eqref{minTikh} with \eqref{defS} and \eqref{Tikh_aao_u}.
\item
In both cases \eqref{xaddxdag}, \eqref{xaddxdag_u}, the regularized approximations $(\xadd,\uadd)$ converge weakly subsequentially to a solution of  \eqref{Axu}, \eqref{Cuy}.
\end{itemize}
\end{proposition}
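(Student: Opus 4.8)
The plan is to follow the classical Tikhonov-regularization template (existence, stability, convergence) adapted to the all-at-once setting, with the extra twist that when only $x$ is penalized (case \eqref{Tikh_aao}) one must recover compactness of the $u$-component by other means. I would organize the proof into four blocks matching the four bullets of the statement.

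\emph{Existence.} Fix $\delta\in(0,\bar\delta)$ and $\alpha>0$. I would take a minimizing sequence $((x_k,u_k))_{k\in\N}\subseteq\calD(\bfF)$ for \eqref{minTikh}. Comparing against the admissible point $(\xdag,\udag)$ (using $\norm{\xdag-\ydel}\le\delta$, i.e.\ $\calS(\bfF(\xdag,\udag),(0,\ydel))\le\frac{\delta^o}{o}$) gives a uniform bound on $\calS(\bfF(x_k,u_k),(0,\ydel))+\alpha\calR(x_k,u_k)$, hence on $\calR(x_k,u_k)$, on $\norm{A(x_k,u_k)}_{W^*}$, and on $\norm{C(u_k)}_Y$. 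In case \eqref{Tikh_aao_u} this bounds $\norm{x_k}_X$ and $\norm{u_k}_V$ directly. In case \eqref{Tikh_aao} it only bounds $\norm{x_k}_X$; to also bound $\norm{u_k}_V$ I would use Assumption~\ref{ass:A_u} together with the identity $u_k = S(x_k)$ — more carefully, write $A(x_k,u_k)=f_k$ with $f_k$ bounded, and estimate $\norm{u_k}_V$ via a mean-value / Taylor argument in $u$: $f_k - A(x_k,u_0) = \int_0^1 A_u(x_k,u_0+t(u_k-u_0))\,dt\,(u_k-u_0)$, so that $\norm{u_k-u_0}_V\le C_A(\norm{f_k}_{W^*}+\norm{A(x_k,u_0)}_{W^*})$, where $\norm{A(x_k,u_0)}_{W^*}$ is controlled through Assumption~\ref{ass:A_x} (growth of $A_x$ in $x$, with $u=u_0$ fixed) and the bound on $\norm{x_k}_X$. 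This is exactly the point where the domain restrictions \eqref{domA:ex1}, \eqref{domA:ex2} are avoided: the a priori bound on $u$ comes from the model equation, not from constraining $\calD(A)$. Once both components are bounded, Assumption~\ref{ass:compactXV} yields a weakly convergent subsequence, Assumption~\ref{ass:bfFweaklyclosed} gives $(x,u)\in\calD(\bfF)$ with $A(x_k,u_k)\rightharpoonup A(x,u)$, $C(u_k)\rightharpoonup C(u)$, and weak lower semicontinuity of the norms in $\calS$ and $\calR$ finishes the existence proof.

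\emph{Stability.} With $y_k\to\ydel$ in $Y$ and $(x_k,u_k)$ a minimizer for data $(0,y_k)$, I would again derive uniform bounds by comparing the value at $(x_k,u_k)$ with the value at a fixed minimizer for data $(0,\ydel)$ (or at $(\xdag,\udag)$), using $\norm{y_k-\ydel}\to0$ to absorb the data perturbation; the $u$-bound in case \eqref{Tikh_aao} is obtained as above. Extract a weakly convergent subsequence, pass to the limit in the objective using weak closedness and weak lsc, and compare against an arbitrary competitor to conclude the limit is a minimizer for data $(0,\ydel)$ — the standard argument, where one uses that $\calS(\bfF(x_k,u_k),(0,y_k))\to\calS(\bfF(x,u),(0,\ydel))$ by lsc from below and by the comparison inequality from above.

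\emph{The regularization estimates and convergence.} For \eqref{xaddxdag}/\eqref{xaddxdag_u}: minimality of $(\xadd,\uadd)$ against the competitor $(\xdag,\udag)$ gives
$\calS(\bfF(\xadd,\uadd),(0,\ydel))+\alpha\calR(\xadd,\uadd)\le \frac{\delta^o}{o}+\alpha\calR(\xdag,\udag)$;
dropping the (nonnegative) $\calS$ term and dividing by $\alpha$ yields the ``a priori'' line of \eqref{xaddxdag_u}, and for the a posteriori choice the lower bound in \eqref{aposteriori} cancels the $\frac{\delta^o}{o}$ term to give $\calR(\xadd,\uadd)\le\calR(\xdag,\udag)$. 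In case \eqref{Tikh_aao}, $\calR=\calR_1$ only controls $\norm{\xadd-x_0}_X$, giving the first line of \eqref{xaddxdag}; the bound $\norm{\uadd}_V\le\bar C$ again comes from the model-equation estimate via Assumptions~\ref{ass:A_u}, \ref{ass:A_x}, now also using that $\calS(\bfF(\xadd,\uadd),(0,\ydel))\le\frac{\tau\delta^o}{o}+\alpha\calR(\xdag,\udag)$ is bounded (for the a priori choice one additionally needs $\frac{\delta^o}{\alpha(\delta)}$ bounded, which follows from \eqref{apriori}). For the final bullet (subsequential weak convergence to a solution as $\delta\to0$): the above bounds make $\{(\xadd,\uadd)\}$ bounded uniformly in $\delta$, so along a subsequence $(\xadd,\uadd)\rightharpoonup(\hat x,\hat u)$. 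From \eqref{apriori} or \eqref{aposteriori} one gets $\calS(\bfF(\xadd,\uadd),(0,\ydel))\le\frac{\tau\delta^o}{o}+\alpha(\delta)\calR(\xdag,\udag)\to0$, hence $A(\xadd,\uadd)\to0$ and $C(\uadd)\to\ydel\to y$ strongly; Assumption~\ref{ass:bfFweaklyclosed} (or the weaker Assumption~\ref{ass:bfFweakstrongclosed}) then gives $A(\hat x,\hat u)=0$, $C(\hat u)=y$, i.e.\ $(\hat x,\hat u)$ solves \eqref{Axu}, \eqref{Cuy}.

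\emph{Main obstacle.} The only nonroutine part — and the reason for Assumptions~\ref{ass:A_u} and \ref{ass:A_x} in case \eqref{Tikh_aao} — is obtaining the a priori bound on $\norm{u}_V$ when $u$ is \emph{not} penalized. I expect the care there to lie in choosing the right ``base point'' ($u_0$ rather than $\udag$) so that the growth bound on $A_x$ is evaluated only along $x$ with $u$ fixed, and in checking that the smallness/continuity condition in Assumption~\ref{ass:A_x}(a) (respectively the global bound (b)) is exactly what makes $\norm{A(x,u_0)}_{W^*}$ controllable by $\norm{x}_X$; everything else is the standard Tikhonov machinery applied componentwise.
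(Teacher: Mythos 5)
Your overall skeleton (minimizing sequences plus Assumptions \ref{ass:compactXV}, \ref{ass:bfFweaklyclosed} for existence and stability, the minimality estimate against $(\xdag,\udag)$ for \eqref{xaddxdag}/\eqref{xaddxdag_u}, and vanishing of $\calS$ plus weak--strong closedness for the final convergence) matches the paper, which simply delegates these standard steps to \cite{HKPS07}. The genuine issue is in the only nonroutine step, the bound on the unpenalized $u$-component in case \eqref{Tikh_aao}. You propose to expand $A$ along a rectangular path: first in $u$ at frozen $x_k$ starting from the base point $u_0$, and then (implicitly, in order to control $\norm{A(x_k,u_0)}_{W^*}$ via Assumption \ref{ass:A_x}) in $x$ at frozen $u_0$. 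Nothing guarantees that these paths stay where $A$ is defined and Fr\'echet differentiable and where Assumptions \ref{ass:A_u}, \ref{ass:A_x} are available: those assumptions hold only on $(\calB\times V)\cap\calD(A)$, only $\calD(\bfF)\subseteq\calD(A)$ is assumed convex, and the points $(x_k,u_0)$ and $(x_k,u_0+t(u_k-u_0))$ need not belong to $\calD(\bfF)$ (indeed $(x_0,u_0)$ itself need not), so $A(x_k,u_0)$ may not even be defined. The paper avoids exactly this by writing the Taylor identity \eqref{idTaylor} along the straight segment joining $(\xdag,\udag)$ and $(\xad,\uad)$, which lies in the convex set $\calD(\bfF)$.

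This also explains why your reading of the smallness condition in Assumption \ref{ass:A_x}(a) is off. Along the admissible segment the growth bound is evaluated at $u^\theta=\udag+\theta(\uad-\udag)$, so the factor $\left(1+\norm{u^\theta}_V\right)$ contains the unknown quantity $\norm{\uad-\udag}_V$ itself; the resulting estimate \eqref{estu} therefore has $\norm{\uad-\udag}_V$ on both sides, and the smallness of $\norm{x_0-\xdag}_X$ together with $\psi\left(\norm{\xdag}_X+\lambda\right)\lambda\to0$ is used precisely to make the coefficient a constant $c<1$ so that this term can be absorbed, giving $\norm{\uadd-\udag}_V\leq\frac{c}{1-c}\left(1+\norm{\udag}_V\right)$. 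In your plan (frozen $u_0$) no absorption would be needed, but that is a symptom of the inadmissible path rather than a simplification; to repair your argument you would have to add an assumption the proposition does not grant, e.g.\ that $A$ is defined and the assumptions hold on all of $\calB\times V$. Apart from this step (and minor slips such as writing $\norm{\xdag-\ydel}\leq\delta$ for \eqref{delta} and inserting $\tau$ into the a priori residual bound), the rest is the standard machinery the paper also invokes.
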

\begin{proof}
The assertion follows from the results in Section 3 of \cite{HKPS07} with $u,F(u)$ there 
defined by $(x,u),\bfF(x,u)$ here. 
Indeed almost all items of \cite[Assumption 2.1]{HKPS07} easily follow from Assumption 
\ref{ass:compactXV} \ref{ass:bfFweaklyclosed} (note that it actually suffices to assume weak sequential closedness of the operator in place of \cite[Assumption 2.1 (3), (5)]{HKPS07}).
The only exception arises in case of regularization with respect to $x$ only \eqref{Tikh_aao}, where boundedness, i.e., weak compactness, of level sets 
\[
\mathcal{M}_{\alpha}(M)=\setof{(x,u)\in\calD(\bfF)}{\frac{1}{\alpha}
\calS(\bfF(x,u),(0,\ydel))
+\calR(x,u)\leq M}
\]
\cite[Assumption 2.1 (6)]{HKPS07}, which the proofs there actually only require to hold for sufficiently small $M$, has to be shown separately. 
For obtaining this boundedness, we will make use of Assumptions \ref{ass:A_u}, \ref{ass:A_x} in that case. 
Since we have also stated convergence with the discrepancy principle, which is not treated in \cite{HKPS07} and for completeness of exposition we provide the details of the convergence part of the proof for the case of  \eqref{Tikh_aao}.

The standard argument of minimality together with \eqref{delta} for any solution $(\xdag,\udag)$ of \eqref{Axu}, \eqref{Cuy} yields the estimate
\begin{equation}\label{est1}
\calS(\bfF(\xad,\uad),(0,\ydel))
+\frac{\alpha}{r}\norm{\xad-x_0}^r
\leq \frac{1}{o}\delta^o+\frac{\alpha}{r}\norm{\xdag-x_0}^r
\end{equation}
thus, upon division by $\alpha$ and setting $\alpha=\alpha(\delta)$ (in case of \eqref{aposteriori} using the lower estimate there) we get the estimate on $\xad$ in \eqref{xaddxdag}. 
To obtain boundedness of $\uad$ as well, we use the identity 
\begin{equation}\label{idTaylor}
A(\xad,\uad)=\underbrace{A(\xdag,\udag)}_{=0}+\int_0^1 
\Bigl(A_x(x^\theta,u^\theta)(\xad-\xdag)+A_u(x^\theta,u^\theta)(\uad-\udag)\Bigr)\, d\theta
\end{equation}
where $x^\theta=\xdag+\theta(\xad-\xdag)$, $u^\theta=\udag+\theta(\uad-\udag)$
and Assumptions \ref{ass:A_u}, \ref{ass:A_x}, as well as \eqref{est1} or the upper bound in \eqref{aposteriori} 
\[
\norm{A(\xad,\uad)}_{W^*}^m \leq \bar{C}_\delta=\frac{m}{\rho}\begin{cases}
\frac{1}{o}\delta^o+\frac{\alpha(\delta)}{r}\norm{x_0-\xdag}_X^r \text{ in case of \eqref{apriori}}\\
\frac{\tau}{o}\delta^o\text{ in case of \eqref{aposteriori}}\end{cases}\,.
\]
to arrive at the estimate
\begin{equation}\label{estu}
\begin{aligned}
\norm{\uad-\udag}_V\leq&C_A \left(\bar{C}_\delta^{1/m}+\psi\left(\norm{\xdag}_X+\norm{\xad-\xdag}_X\right)
\norm{\xad-\xdag}_X \right)\\
&\times\begin{cases}
(1+\norm{\udag}_V+\norm{\uad-\udag}_V) \text{ in case (a)}\\
1 \text{ in case (b)} \end{cases}\,.
\end{aligned}
\end{equation}
By the first (already proven) part of \eqref{xaddxdag}, this directly yields a bound on $\norm{\uadd-\udag}_V$ in case (b). In case (a) we additionally use the fact that the smallness assumption on $\norm{x_0-\xdag}_X$ and the growth condition on $\psi$ allows us to achieve
\[
C_A  \left(\bar{C}_\delta^{1/m}+\psi\left(\norm{\xdag}_X+\phi\left(\norm{x_0-\xdag}_X\right)\right)\phi\left(\norm{x_0-\xdag}_X\right)\right) \leq c < 1
\]
for some constant $c$ independent of $\delta$, where 
\[
\phi(s)=\begin{cases}
s+(s^r+\frac{r\delta^o}{o\alpha(\delta)})^{1/r} \text{ in case of \eqref{apriori}}\\
2s\text{ in case of \eqref{aposteriori}}\end{cases}\,.
\]
Rearranging terms in \eqref{estu} (case (a)) we therefore get  
\[
\norm{\uadd-\udag}_V\leq \frac{c}{1-c} \left(1+\norm{\udag}_V\right) \,.
\]
The rest follows by standard arguments from the assumed continuity assumptions on $A$ and $C$.
\end{proof}

\begin{remark}
Note that in case of regularization with respect to both $x$ and $u$, \eqref{Tikh_aao_u}, we do not need Assumptions \ref{ass:A_u}, \ref{ass:A_x} and can therefore also deal with situations in which a parameter-to state map not necessarily exists.

Well-definedness of $\alpha$ according to the discrepancy principle \eqref{aposteriori} follows from \cite[Lemma 1]{KSS09} provided $X\times V$ is reflexive and strictly convex and either (i) $\bfF$ is weakly closed (i.e., Assumption \ref{ass:bfFweaklyclosed} is satisfied) and $Y$ is reflexive or (ii) $\bfF$ is weak-to-weak continuous and $\calD(\bfF)$ is weakly closed (which by the assumed convexity of this set is satisfied, e.g., if it is closed wrt. norm convergence).

If $X\times V$ satisfies the Kadets-Klee property, the results of Theorem \ref{th:convTikh} imply strong convergence and if the solution is unique then by a subsequence-subsequence argument the whole sequence converges.
\end{remark}

\subsection{Convergence rates}
As usual we can conclude convergence rates with respect to the Bregman distance under source conditions. 
Just exemplarily we will  state a rates result for the all-at-once Tikhonov method with regularization with respect to both $x$  and $u$ under a benchmark source condition.
To this end, we use an element of the subgradient
\begin{equation}\label{xizeta}
\xi\in \partial\calR(\xdag,\udag)\text{ where }\calR(x,u)=\calR_2(x,u)=\frac{1}{r}\norm{x-x_0}_X^r+\frac{1}{q}\norm{u-u_0}_V^q
\end{equation}
to define the Bregman distance 
\begin{equation}\label{Bregdist}
D_{\xi}^{(x_0,u_0)}((x,u),(\xdag,\udag))= \calR(x,u)-\calR(\xdag,\udag)-\langle \xi,(x-\xdag,x-\udag)\rangle
\end{equation}
and impose a local smoothness condition on $\bfF$.
\begin{assumption}\label{ass:Lipschitz}
\[
\begin{aligned}
&\norm{A_x(\xdag,\udag)(x-\xdag)+A_u(\xdag,\udag)(u-\udag)}
+\norm{C'(\udag)(u-\udag)}\\
&\leq
\bar{C}_L\Bigl(\frac{\rho}{m}\norm{A(x,u)-A(\xdag,\udag)}^m+\frac{1}{o}\norm{C(u)-C(\udag)}^o\Bigr)^{1/t}\\
&\qquad
+L D_{\xi}^{(x_0,u_0)}((x,u),(\xdag,\udag))\,,
\end{aligned}
\]
for some $\xi$ according to \eqref{xizeta}, $t>0$, and all $(x,u)\in\calD(\bfF)$.
\end{assumption}
In case $m=o=t$, Assumption \ref{ass:Lipschitz} follows from the inverse triangle inequality and the Taylor remainder estimate 
\[
\begin{aligned}
&\norm{A(\xdag,\udag)+A_x(\xdag,\udag)(x-\xdag)+A_u(\xdag,\udag)(u-\udag)-A(x,u)}\\
&+\norm{C(\udag)+C'(\udag)(u-\udag)-C(u)}
\leq
L D_{\xi}^{(x_0,u_0)}((x,u),(\xdag,\udag))\,,
\end{aligned}
\]
which in the quadratic Hilbert space case $D_{\xi}^{(x_0,u_0)}((x,u),(\xdag,\udag))=\frac12\norm{x-\xdag}^2+\frac12\norm{u-\udag}^2$ corresponds to the usual estimate obtained under a local Lipschitz condition on $\bfF'$.

\begin{proposition}\label{th:ratesTikh}
Let $\bfF'$ satisfy Assumption \ref{ass:Lipschitz} and let, for some $(\vmod,\vobs)\in W\times Y^*$ with $L\norm{(\vmod,\vobs)}:=L\Bigl(\norm{\vmod}_{W}+\norm{\vobs}_{V^*}\Bigr)<1$ the source condition
\begin{equation}\label{sc_aao_u}
\xi=\bfF'(\xdag,\udag)^*(\vmod,\vobs)
\end{equation}
hold for some $\xi\in\partial\calR_2(\xdag,\udag)$. 

Then for the Tikhonov minimizers according to \eqref{minTikh}, \eqref{defS}, \eqref{Tikh_aao_u} with the apriori choice
\begin{equation}\label{apriori_rate}
\alpha(\delta)\sim\delta^\frac{o}{t^*}
\end{equation} 
(with $t^*=\frac{t}{t-1}$) or the a posteriori choice \eqref{aposteriori} we have 
\begin{equation}\label{rate}
D_{\xi}^{(x_0,u_0)}((\xadd,\uadd),(\xdag,\udag))
=O(\delta^{o/t})
\end{equation}
\end{proposition}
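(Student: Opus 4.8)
The plan is to run the by-now standard Bregman-distance rates argument for variational regularization, but directly for the all-at-once operator $\bfF$ and the splitting functional $\calR_2$. Note first that since $\calR_2(x,u)=\frac1r\norm{x-x_0}_X^r+\frac1q\norm{u-u_0}_V^q$ is a sum of a function of $x$ only and a function of $u$ only, its subdifferential, and hence the Bregman distance $D_\xi^{(x_0,u_0)}$, splits additively over the two components; so the only genuinely new ingredient compared with the reduced case is that the local smoothness Assumption \ref{ass:Lipschitz} is used in place of the usual combined nonlinearity/source condition on $F'$. I would start from the minimality of $(\xad,\uad)$ tested against the exact solution $(\xdag,\udag)$,
\[
\calS(\bfF(\xad,\uad),(0,\ydel))+\alpha\calR_2(\xad,\uad)\leq \calS(\bfF(\xdag,\udag),(0,\ydel))+\alpha\calR_2(\xdag,\udag)\,,
\]
and use $\bfF(\xdag,\udag)=(0,y)$ together with \eqref{delta} so that the data term on the right is at most $\frac{\delta^o}{o}$. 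Writing $\calR_2(\xad,\uad)-\calR_2(\xdag,\udag)=D_\xi^{(x_0,u_0)}((\xad,\uad),(\xdag,\udag))+\langle\xi,(\xad-\xdag,\uad-\udag)\rangle$ with $\xi$ from \eqref{xizeta} then gives
\[
\calS(\bfF(\xad,\uad),(0,\ydel))+\alpha D_\xi^{(x_0,u_0)}((\xad,\uad),(\xdag,\udag))\leq \frac{\delta^o}{o}-\alpha\langle\xi,(\xad-\xdag,\uad-\udag)\rangle\,.
\]

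Next I would insert the source condition \eqref{sc_aao_u}, rewriting $\langle\xi,(\xad-\xdag,\uad-\udag)\rangle$ as $\langle(\vmod,\vobs),\bfF'(\xdag,\udag)(\xad-\xdag,\uad-\udag)\rangle$ and bounding its modulus by $\norm{(\vmod,\vobs)}$ times $\norm{A_x(\xdag,\udag)(\xad-\xdag)+A_u(\xdag,\udag)(\uad-\udag)}+\norm{C'(\udag)(\uad-\udag)}$. Assumption \ref{ass:Lipschitz} estimates this last quantity by $\bar C_L\bigl(\frac{\rho}{m}\norm{A(\xad,\uad)}^m+\frac1o\norm{C(\uad)-y}^o\bigr)^{1/t}+L\,D_\xi^{(x_0,u_0)}((\xad,\uad),(\xdag,\udag))$, using $A(\xdag,\udag)=0$ and $C(\udag)=y$. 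Since $L\norm{(\vmod,\vobs)}<1$, the Bregman term produced here is absorbed into the left-hand side; and since $\norm{C(\uad)-y}\leq\norm{C(\uad)-\ydel}+\delta$, hence $\norm{C(\uad)-y}^o\leq 2^{o-1}(\norm{C(\uad)-\ydel}^o+\delta^o)$, the residual misfit $\frac{\rho}{m}\norm{A(\xad,\uad)}^m+\frac1o\norm{C(\uad)-y}^o$ is bounded by a constant times $\bigl(\calS(\bfF(\xad,\uad),(0,\ydel))+\delta^o\bigr)$. Abbreviating $\Phi=\calS(\bfF(\xad,\uad),(0,\ydel))$ and $D=D_\xi^{(x_0,u_0)}((\xad,\uad),(\xdag,\udag))$, this leaves an inequality of the schematic form
\[
\Phi+\alpha(1-L\norm{(\vmod,\vobs)})D\leq \frac{\delta^o}{o}+c\,\alpha\,(\Phi+\delta^o)^{1/t}\,.
\]

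For the a priori choice I would apply Young's inequality with exponents $t$ and $t^*$ to the term $c\,\alpha\,\Phi^{1/t}$, absorbing $\frac12\Phi$ on the left and producing a term of order $\alpha^{t^*}$; dropping $\frac12\Phi$, dividing by $\alpha$, and using $(\Phi+\delta^o)^{1/t}\leq\Phi^{1/t}+\delta^{o/t}$ yields $D\leq C\bigl(\frac{\delta^o}{\alpha}+\delta^{o/t}+\alpha^{t^*-1}\bigr)$, and with $\alpha\sim\delta^{o/t^*}$, together with $1-\frac1{t^*}=\frac1t$, each summand is $O(\delta^{o/t})$. For the a posteriori choice \eqref{aposteriori} I would instead use the lower bound $\frac{\delta^o}{o}\leq\Phi$ already in the minimality inequality to deduce $\calR_2(\xad,\uad)\leq\calR_2(\xdag,\udag)$, hence $D\leq|\langle\xi,(\xad-\xdag,\uad-\udag)\rangle|$, then repeat the source-condition and Assumption \ref{ass:Lipschitz} estimates and the absorption of the $LD$-term, and finally invoke the upper bound $\Phi\leq\frac{\tau\delta^o}{o}$ to conclude $D=O(\delta^{o/t})$. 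The only delicate points are the two absorption steps — of the Bregman term via $L\norm{(\vmod,\vobs)}<1$ and of the residual misfit via Young's inequality — which have to be arranged simultaneously, together with keeping track of the $\delta$-shift between $y$ and $\ydel$ inside the Taylor-type estimate; everything else is routine variational-regularization bookkeeping.
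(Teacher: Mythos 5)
Your proof is correct and follows essentially the same route as the paper: the classical Bregman-distance rates argument (minimality, insertion of the source condition \eqref{sc_aao_u}, estimation of the linearized term via Assumption \ref{ass:Lipschitz}, absorption of the $L D$-term using $L\norm{(\vmod,\vobs)}<1$, then Young's inequality with the choice \eqref{apriori_rate} in the a priori case and the two bounds of the discrepancy principle \eqref{aposteriori} in the a posteriori case). The only cosmetic difference is that you track the shift between $y$ and $\ydel$ inside the misfit explicitly via the $2^{o-1}$ triangle-inequality estimate, which the paper leaves implicit.
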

\begin{proof}
The proof follows the lines of the classical rates proof from \cite{EKN89}.

By \eqref{xaddxdag_u} and \eqref{sc_aao_u} we have in case of \eqref{aposteriori}
\[
\begin{aligned}
&D_{\xi}^{(x_0,u_0)}((\xadd,\uadd),(\xdag,\udag))\\
&\leq -\langle (\vmod,\vobs),\bfF'(\xdag,\udag)(\xadd-\xdag,\uadd-\udag)\rangle \\
&\leq \norm{(\vmod,\vobs)}
\Bigl(
\bar{C}_L\Bigl(\frac{\rho}{m}\norm{A(\xadd,\uadd)-A(\xdag,\udag)}^m\\
&\quad+\frac{1}{o}\norm{C(\uadd)-C(\udag)}^o\Bigr)^{1/t}
+L D_{\xi}^{(x_0,u_0)}((\xadd,\uadd),(\xdag,\udag))\Bigr)
\end{aligned}
\]
hence with $c=1-L\norm{(\vmod,\vobs)}\Bigr)>0$ 
\[
cD_{\xi}^{(x_0,u_0)}((\xadd,\uadd),(\xdag,\udag))\leq 
\norm{(\vmod,\vobs)} \Bigl(\frac{\bar{C}_L(\tau+1)\delta^o}{o}\Bigr)^{1/t}\,.
\]
In case of \eqref{apriori_rate} we get from minimality (cf. \eqref{est1}) 
\[
\calS(\bfF(\xad,\uad),(0,\ydel))+\alpha\calR_2(\xad)
\leq \frac{1}{o}\delta^o+\alpha\calR_2(\xdag)\,,
\]
hence after division by $\alpha$ and by definition \eqref{Bregdist} of the Bregman distance, as well as the source condition \eqref{sc_aao_u}
\[
\begin{aligned}
&\frac{\calS(\bfF(\xad,\uad),(0,\ydel))}{\alpha}+D_{\xi}^{(x_0,u_0)}((\xad,\uad),(\xdag,\udag))\\
&\leq 
\frac{\delta^o}{\alpha}-\langle (\vmod,\vobs),\bfF'(\xdag,\udag)(\xad-\xdag,\uad-\udag)\rangle \\
&\leq
\frac{\delta^o}{\alpha}+\norm{(\vmod,\vobs)}
\Bigl(
\bar{C}_L\Bigl(\calS(\bfF(\xad,\uad),(0,\ydel))\Bigr)^{1/t}\\
&\quad +L D_{\xi}^{(x_0,u_0)}((\xad,\uad),(\xdag,\udag))\Bigr)\,,
\end{aligned}
\]
hence with $c$ as above and using Young's inequality
\[
\begin{aligned}
&\frac{\calS(\bfF(\xad,\uad),(0,\ydel))}{\alpha}+cD_{\xi}^{(x_0,u_0)}((\xad,\uad),(\xdag,\udag))\\
&\leq \frac{\delta^o}{\alpha}+\norm{(\vmod,\vobs)} \bar{C}_L\Bigl(\calS(\bfF(\xad,\uad),(0,\ydel))\Bigr)^{1/t}\\
&\leq \frac{\delta^o}{\alpha}+\Bigl(\Bigl(\frac{\norm{(\vmod,\vobs)} \bar{C}_L}{t}\Bigr)^t 2\alpha\Bigr)^{\frac{1}{t-1}}+\frac{\calS(\bfF(\xad,\uad),(0,\ydel))}{2\alpha}\,,
\end{aligned}
\]
which by the choice \eqref{apriori_rate} yields \eqref{rate}.
\end{proof}

\begin{remark}
Note that in case of regularization with respect to $x$ only, i.e., \eqref{Tikh_aao}, we do get boundedness also of the $u$ part via Assumptions \ref{ass:A_u}, \ref{ass:A_x}. However, the sharp estimate \eqref{xaddxdag_u}, that was crucially used in the rates proof above, fails to hold in general. Therefore we do not expect to get (optimal) convergence rates in that case.
\end{remark}

\subsection{Comparison of implementation}
To compare the all-at-once Tikhonov method \eqref{minTikh}, with \eqref{defS} and \eqref{Tikh_aao} or \eqref{Tikh_aao_u} with Tikhonov regularization for the reduced formulation
\begin{equation}\label{Tikh_red}
\min_{x\in\calD(F)} \frac{1}{o}\norm{F(x)-\ydel}_Y^o+\frac{\alpha}{r}\norm{x-x_0}^r
\end{equation}
we write the latter as a PDE constrained minimization problem
\begin{equation}\label{Tikh_red_PDEcons}
\min_{(x,u)\in\calD(\bfF)} \frac{1}{o}\norm{C(u)-\ydel}_Y^o+\frac{\alpha}{r}\norm{x-x_0}^r \quad \mbox{ s.t. } A(x,u)=0\,,
\end{equation}
and denote the Tikhonov minimizer as well as its corresponding state by a bar, i.e., $\xadb$ minimizes \eqref{Tikh_red_PDEcons} and $A(\xadb ,\uadb )=0$.
For convenience of exposition we will assume $W$ to be reflexive and identify it with its bidual.

We first of all show that reduced Tikhonov regularization \eqref{Tikh_red_PDEcons}, \eqref{Tikh_red} is equivalent to all-at-once Tikhonov regularization \eqref{minTikh}, with \eqref{defS} and \eqref{Tikh_aao} in case $m=1$ with $\rho$ sufficiently large.
This is due to exact penalization, cf., e.g., \cite[Theorem 17.3]{NocedalWright}, whose proof remains valid in the Banach space setting. More precisely, for this purpose $\rho$ has to be larger than the norm of the adjoint state, i.e., the solution $\bar{w}_\alpha^\delta\in W$ to 
\[
A_u(\xadb ,\uadb )^* w= C'(\uadb )^*\, J_o^Y(C(\uadb )-\ydel) \text{ in }V^*
\]
for $(\xadb ,\uadb )$ solving \eqref{Tikh_red_PDEcons}. Existence of this adjoint state is ensured by Fr\'{e}chet differentiability of $C$ together with Assumption \ref{ass:A_u}, which also implies that 
\begin{equation}\label{bdw}
\norm{\bar{w}_\alpha^\delta}_W\leq C_A \norm{C'(\uadb )}\, \norm{C(\uadb )-\ydel}_Y^{o-1}\,.
\end{equation}
Moreover, by the same arguments as in the proof of Theorem \ref{th:convTikh}, uniform (wrt. $\delta>0$) boundedness of $(\xadbd,\uadbd)$ can be concluded from its definition as a minimizer of \eqref{Tikh_red}
\[
\frac{1}{o}\frac{\norm{F(\xadbd)-\ydel}_Y^o}{\alpha(\delta)}+\frac{1}{r}\norm{\xadbd-x_0}^r
\leq\frac{1}{o}\frac{\delta^o}{\alpha(\delta)}+\frac{1}{r}\norm{\xdag-x_o}^r
\]
with $\alpha(\delta)$ chosen a priori according to \eqref{apriori} or a posteriori according to 
\[
\delta^o\leq \norm{F(\xadbd)-\ydel}^o
\leq \tau\delta^o\,,
\]
(which due to exact penalization will finally coincide with \eqref{aposteriori},)
together with the identity (cf. \eqref{idTaylor})
\begin{align*}
0&=A(\xadbd,\uadbd)=A(\xad,\uad)\\
&=\int_0^1 
\Bigl(A_x(x^\theta,u^\theta)(\xadbd-\xdag)+A_u(x^\theta,u^\theta)(\uadbd-\udag)\Bigr)\, d\theta
\end{align*}
with $x^\theta=\xdag+\theta(\xadbd-\xdag)$, $u^\theta=\udag+\theta(\uadbd-\udag)$,
which yields an estimate like \eqref{estu} with $\bar{C}_\delta=0$ and $(\xad,\uad)$ replaced by $(\xadbd,\uadbd)$.
Thus, 
from \eqref{bdw} we get a uniform bound on $\norm{\bar{w}_\alpha^\delta}_W$ and can conclude the following equivalence.
\begin{proposition}\label{prop:equivTikh}
Let Assumptions \ref{ass:A_u}, \ref{ass:A_x} be satisfied and let $C$ be Fr\'{e}chet differentiable with $C'$ mapping bounded sets to bounded sets. 
There exist $\rho>0$ sufficiently large and $\bar{\delta}>0$ sufficiently small such that for all $\delta\in(0,\bar{\delta})$ and $\alpha(\delta)$ chosen according to \eqref{apriori} or \eqref{aposteriori}, the minimizers of the all-at-once Tikhonov functional according to \eqref{minTikh} with \eqref{defS}, \eqref{Tikh_aao} with $m=1$ coincide with those of the reduced Tikhonov functional \eqref{Tikh_red}.
\end{proposition}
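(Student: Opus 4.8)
The plan is to read the all-at-once Tikhonov functional \eqref{minTikh} with \eqref{defS}, \eqref{Tikh_aao} and $m=1$, i.e.
\[
(x,u)\ \longmapsto\ \rho\,\norm{A(x,u)}_{W^*}+\tfrac1o\norm{C(u)-\ydel}_Y^o+\tfrac\alpha r\norm{x-x_0}_X^r\,,
\]
as the $\ell^1$ exact penalty functional for the PDE-constrained reformulation \eqref{Tikh_red_PDEcons} of the reduced Tikhonov problem \eqref{Tikh_red}, and to invoke the exact penalization principle \cite[Theorem~17.3]{NocedalWright}, whose proof remains valid in the Banach space setting. Since the two functionals coincide on the feasible set $\{(x,u):A(x,u)=0\}=\{(x,S(x))\}$, everything reduces to showing, for $\rho$ large enough and $\bar\delta$ small enough, that (i) no minimizer $(\hat x,\hat u)$ of the all-at-once functional can have $A(\hat x,\hat u)\ne0$, and (ii) a reduced minimizer $(\xadbd,S(\xadbd))$ realises the common infimum; together these yield the asserted coincidence of minimizer sets.

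First I would set up the first order system of \eqref{Tikh_red_PDEcons}: by Fr\'echet differentiability of $A$, $C$ and Assumption~\ref{ass:A_u} the constraint $A(x,u)=0$ carries the Lagrange multiplier $\bar{w}_\alpha^\delta\in W\cong W^{**}$ solving $A_u(\xadb,\uadb)^*w=C'(\uadb)^*J_o^Y(C(\uadb)-\ydel)$ in $V^*$, and $\bar{w}_\alpha^\delta$ obeys \eqref{bdw}; this provides the penalty threshold. Next I would make the threshold uniform in $\delta$: as indicated after \eqref{bdw}, minimality of $\xadbd$ yields, exactly as in the proof of Proposition~\ref{th:convTikh} but with vanishing model residual in the Taylor identity \eqref{idTaylor}, a $\delta$-uniform bound on $\norm{\xadbd-x_0}_X$ and then, through \eqref{idTaylor} together with Assumptions~\ref{ass:A_u},~\ref{ass:A_x} (in case~(a) using the smallness of $\norm{\xdag-x_0}_X$ and the growth condition on $\psi$), on $\norm{\uadbd}_V$; since $C'$ maps bounded sets to bounded sets and the residuals $\norm{C(\uadbd)-\ydel}_Y$ are bounded (indeed tend to $0$) in either of the cases \eqref{apriori}, \eqref{aposteriori}, \eqref{bdw} then gives a uniform bound $\norm{\bar{w}_\alpha^\delta}_W\le\bar\rho$.

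Fixing $\rho>\bar\rho$, I would first localise the all-at-once minimizers: comparing the all-at-once functional at $(\hat x,\hat u)$ with its value at $(\xdag,\udag)$ (where $A(\xdag,\udag)=0$, $C(\udag)=y$, $\norm{y-\ydel}_Y\le\delta$) bounds both $\tfrac\alpha r\norm{\hat x-x_0}_X^r$ and $\rho\,\norm{A(\hat x,\hat u)}_{W^*}$ by $\tfrac{\delta^o}o+\tfrac\alpha r\norm{\xdag-x_0}_X^r$, so $\norm{A(\hat x,\hat u)}_{W^*}$ is small for $\bar\delta$ small; \eqref{idTaylor} with Assumptions~\ref{ass:A_u},~\ref{ass:A_x} bounds $\norm{\hat u}_V$, and the Implicit Function Theorem near $(\hat x,\hat u)$ (again Assumption~\ref{ass:A_u}) gives $\hat x\in\calD(S)$ and $\norm{\hat u-S(\hat x)}_V\le 2C_A\norm{A(\hat x,\hat u)}_{W^*}$. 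For $(x,u)$ in this bounded region with $x\in\calD(S)$ I would then estimate, using convexity of $v\mapsto\tfrac1o\norm{v}_Y^o$, the mean value identities relating $u-S(x)$ to $A(x,u)$ via $A_u$ and $C(u)-C(S(x))$ via $C'$, and the ensuing adjoint representation,
\[
\tfrac1o\norm{C(S(x))-\ydel}_Y^o-\tfrac1o\norm{C(u)-\ydel}_Y^o\ \le\ \bar\rho\,\norm{A(x,u)}_{W^*}\ <\ \rho\,\norm{A(x,u)}_{W^*}
\]
whenever $A(x,u)\ne0$, so that the all-at-once functional at $(x,u)$ strictly exceeds its value at $(x,S(x))$, which equals the reduced functional \eqref{Tikh_red} at $x$. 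Applied at $(\hat x,\hat u)$ this forces $A(\hat x,\hat u)=0$, hence $\hat u=S(\hat x)$ and $\hat x$ minimizes \eqref{Tikh_red}; conversely $(\xadbd,\uadbd)=(\xadbd,S(\xadbd))$ realises the common infimum and therefore minimizes the all-at-once functional, which is the assertion.

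The main obstacle is to make the penalization \emph{uniform and strict}: the multiplier bound \eqref{bdw} must dominate $\rho$ not merely at the exact minimizer but on the whole sub-level set on which global minimizers can lie, and uniformly in $\delta\in(0,\bar\delta)$. This is exactly what necessitates the a priori bounds of the previous step --- hence the repeated appeal to Assumptions~\ref{ass:A_u},~\ref{ass:A_x}, the smallness of $\norm{\xdag-x_0}_X$, and a suitably small $\bar\delta$ --- and it also requires some care with the non-Hilbert, non-smooth Banach space framework (set-valued duality maps $J_o^Y$ and the single-valued selections from them, averaged derivatives in place of exact ones in the mean value identities, and the identification $W\cong W^{**}$).
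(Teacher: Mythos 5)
Your proposal follows essentially the same route as the paper: exact penalization in the sense of \cite[Theorem 17.3]{NocedalWright} with the penalty threshold given by the adjoint state bound \eqref{bdw}, made uniform in $\delta$ via the minimality estimate and the Taylor identity \eqref{idTaylor} with vanishing model residual (i.e., \eqref{estu} with $\bar{C}_\delta=0$) together with Assumptions \ref{ass:A_u}, \ref{ass:A_x} and the boundedness of $C'$ on bounded sets. Your additional spelled-out verification of the penalization step (localizing all-at-once minimizers and comparing with $(x,S(x))$) merely fills in what the paper delegates to the cited theorem, so the argument is correct and matches the paper's proof.
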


We now consider first order optimality conditions for the reduced and the all-at-once formulations \eqref{Tikh_red_PDEcons}, \eqref{minTikh} with general $m\geq1$, so that they are not necessarily equivalent, and for this purpose assume $A$, $C$, and the occurring norms to be continuously Fr\'{e}chet differentiable (i.e., the corresponding spaces to be uniformly smooth). 
In case $\calD(\bfF)=X\times V$, we get the first order necessary conditions 
\begin{equation}\label{KKTred_PDEcons}
\begin{aligned}
&A(\xadb ,\uadb )=0\\
&\alpha\,J_r^X(\xadb-x_0)=-A_x(\xadb ,\uadb )^* \wadb\\
&A_u(\xadb ,\uadb )^* \wadb = -C'(\uadb )^*\,J_o^Y(C(\uadb )-\ydel) 
\end{aligned}
\end{equation}
for a minimizer $(\xadb ,\uadb )$ of the reduced Tikhonov functional \eqref{Tikh_red_PDEcons} 
and 
\[
\begin{aligned}
&\alpha\,J_r^X(\xad-x_0)=-\rho A_x(\xad ,\uad )^*\,J_m^{W^*}(A(\xad,\uad))\\
&\rho A_u(\xad,\uad)^*\,J_m^{W^*}(A(\xad,\uad)) = -C'(\uadb )^*\,J_o^Y(C(\uadb )-\ydel) 
\left[-\alpha J_r^V(\uad-u_0)\right]
\end{aligned}
\]
for a minimizer $(\xad,\uad)$ of the all-at-once Tikhonov functional \eqref{minTikh} with \eqref{defS} and  \eqref{Tikh_aao} or \eqref{Tikh_aao_u}, where the term in brackets is skipped in case of \eqref{Tikh_aao}.
Upon defining $\wad$ as the solution to $A_u(\xad,\uad)^* w= -C'(\uad)^*\,J_o^Y(C(\uad)-\ydel)\left[-\alpha J_r^V(\uad-u_0)\right]$, as justified by Assumption \ref{ass:A_u},
the latter can be rewritten as
\begin{equation}\label{KKTaao}
\begin{aligned}
&\rho \,J_m^{W^*}(A(\xad,\uad))=\wad\\
&\alpha\,J_r^X(\xad-x_0)=-A_x(\xad,\uad)^* \wad\\
&A_u(\xad,\uad)^* \wad= -C'(\uad)^*\,J_o^Y(C(\uad)-\ydel) 
\left[-\alpha J_r^V(\uad-u_0)\right]
\end{aligned}
\end{equation}
so very similar to \eqref{KKTred_PDEcons}.
These first order necessary optimality conditions can be justified, e.g., by a Zowe Kurcyuscz constraint qualification \cite[Section 6.1]{TroeltzschBuch}, which in the all-at-once case is an empty condition 
and in the reduced case  
amounts to surjectivity of $A'(\xadb,\uadb)$ and is thus obviously satisfied, e.g., under Assumption \ref{ass:A_u}.

In case of additional convex constrains on the parameters $\calD(\bfF)=\calC\times V$ with some convex set $\calC$, e.g., defined by the pointwise bounds $\ul{a}$, $\ol{a}$, $\ul{c}$, $\ul{q}$ in \eqref{domA:ex1}, \eqref{domA:ex2},
the second line in \eqref{KKTred_PDEcons}, \eqref{KKTaao} changes to
\begin{equation}\label{var}
\langle \alpha\,J_r^X(\tilde{x}-x_0)+A_x(\tilde{x},\tilde{u})^* \tilde{w}, x-\tilde{x}\rangle_{X^*,X}\geq0 \text{ for all }x\in\calC
\end{equation}
for $(\tilde{x},\tilde{u},\tilde{w})=(\xadb,\uadb,\wadb)$, $(\tilde{x},\tilde{u},\tilde{w})=(\xad,\uad,\wad)$, respectively, and the Zowe Kurcyuscz constraint qualification is again always satisfied in the all-at-once case 
and amounts to 
\[
A_x(\xadb,\uadb)\calC(\xadb)+A_u(\xadb,\uadb)V=W^*
\]
with
\[
\calC(\xadb)=\setof{\gamma (x-\xadb)}{\gamma\geq0, x\in\calC}
\]
in the reduced case, which again obviously holds under Assumption \ref{ass:A_u}.

\section{The iteratively regularized Gauss-Newton method}\label{sec:IRGNM}
Throughout this section we assume $A$ and $C$ to be continuously Fr\'{e}chet differentiable on $\calD(\bfF)$ and abbreviate, analogously to \eqref{defS}
\begin{equation}\label{defSIRGNM}
\begin{aligned}
&\calS(\bfF(\tilde{x},\tilde{u})+\bfF'(\tilde{x},\tilde{u})(x-\tilde{x},u-\tilde{u}),(\ymod,\yobs))\\
&=\frac{\rho}{m}\norm{A(\tilde{x},\tilde{u})+A_x(\tilde{x},\tilde{u})(x-\tilde{x})+A_u(\tilde{x},\tilde{u})(u-\tilde{u})-\ymod}_{W^*}^m\\
&\qquad+\frac{1}{o}\norm{C(\tilde{u})+C'(\tilde{u})(u-\tilde{u})-\yobs}_Y^o\,.
\end{aligned}
\end{equation}
Given some iterate $(\xkd,\ukd)\in\calD(\bfF)$, we define the next Newton (more, precisely, iteratively regularized Gauss Newton IRGNM, cf., e.g. \cite{Baku92,BakKok04,KalNeuSch08,KH10,KSS09,HohageWerner13,Werner15}) iterate
$(\xkpd(\alpha),\ukpd(\alpha))$ as a minimizer of
\begin{equation}\label{minIRGNM}
\min_{(x,u)\in\calD(\bfF)} 
\calS(\bfF(\xkd,\ukd)+\bfF'(\xkd,\ukd)(x-\xkd,u-\ukd),(0,\ydel)) 
+\alpha\calR(x,u)
\end{equation}
with
\begin{equation}\label{IRGNM_aao}
\calR(x,u)=\calR_1(x)=\frac{1}{r}\norm{x-x_0}_X^r
\end{equation}
or 
\begin{equation}\label{IRGNM_aao_u}
\calR(x,u)=\calR_2(x,u)=\frac{1}{r}\norm{x-x_0}_V^r+\frac{1}{q}\norm{u-u_0}_V^q\,,
\end{equation}
like in the previous section, cf. \eqref{Tikh_aao}, \eqref{Tikh_aao_u}.

\subsection{Well-definedness and convergence}
As compared to Tikhonov regularization, iterative regularization methods for nonlinear problems can only be proven to converge under certain structural assumptions restricting the nonlinearity of the forward operator.
We here first of all concentrate on convergence under the simple tangential cone condition.
\begin{assumption}\label{ass:tangcone}
\[
\begin{aligned}
&\calS(\bfF(\tilde{x},\tilde{u})+\bfF'(\tilde{x},\tilde{u})(x-\tilde{x},u-\tilde{u}),\bfF(x))
\leq
c_{tc}\calS(\bfF(x,u),\bfF(\tilde{x},\tilde{u}))
\end{aligned}
\]
for some $0<c_{tc}<1$ and all $(x,u),(\tilde{x},\tilde{u})\in\calD(\bfF) \cap \calB_\varrho(x_0,u_0)$
with $\calB_\varrho(x_0,u_0)$ a closed ball with sufficiently small radius $\varrho>0$ around $(x_0,u_0)$.
\end{assumption}
\noindent
Correspondingly, we consider a posteriori choice of the stopping index $k_*=k_*(\delta)$ by the discrepancy principle
\begin{equation}\label{aposterioriIRGNM}
k_*=\min\{k\in\N \ : \ \calS(\bfF(\xkd,\ukd),(0,\ydel))\leq\frac{\tau\delta^o}{o}\}
\end{equation}
and of $\alpha_k$ for $k\leq k_*$ by the inexact Newton strategy
\begin{equation}\label{aposterioriIRGNMk}
\ul{\sigma}\leq\sigma_k(\xkpd(\alpha_k),\ukpd(\alpha_k))\leq\ol{\sigma}
\end{equation}
where
\[
\sigma_k(x,u)=
\frac{\calS(\bfF(\xkd,\ukd)+\bfF'(\xkd,\ukd)(x-\xkd,u-\ukd),(0,\ydel))}{\calS(\bfF(\xkd,\ukd),(0,\ydel))}
\]
is the ratio between the predicted and the old data misfit.
Note that by definition \eqref{aposterioriIRGNM} of $k_*$, the denominator in $\sigma_k(\xkpd(\alpha_k),\ukpd(\alpha_k))$ is nonzero for $k<k_*$ and $\delta>0$.

\begin{proposition}\label{th:convIRGNM}
Let $X\times V$ be reflexive and uniformly convex and let Assumptions 
\ref{ass:compactXV}, \ref{ass:bfFweakstrongclosed}, \ref{ass:tangcone} be satisfied with $c_{tc}$ sufficiently small
$$c_{tc}<\ul{\sigma}<\ol{\sigma}<1$$
and either (i) $\bfF'(x,u)$ be weakly closed for all $(x,u)\in\calD(\bfF)$ and $Y$ be reflexive
or (ii) $\calD(\bfF)$ be weakly closed.
In case of regularization with respect to $x$ only \eqref{IRGNM_aao}, we additionally impose Assumptions \ref{ass:A_u}, \ref{ass:A_x}.

Moreover, let $\tau$ be chosen sufficiently large so that
\begin{equation}\label{closcondIRGNM}
C_S(c_{tc}+\frac{1+C_Sc_{tc}}{\tau})\leq \ul{\sigma}
\mbox{ and }c_{tc}<\frac{1-\ol{\sigma}}{2}\,,
\end{equation}
for 
\[
C_S=\max\{\frac{\rho}{m} 2^{m-1},\frac{1}{o}2^{o-1}\}\,,
\]
let $(x_0,u_0)$ be close enough to $(\xdag,\udag)$ with respect to the Bregman distance,
and let the signal-to-noise ratio condition
$$ \tau<\frac{o\calS(\bfF(x_0,u_0),(0,\ydel))}{\delta^o} $$
hold.
\begin{itemize}
\item
Then for all $k\leq k_*(\delta)-1$ with $k_*(\delta)$ according to \eqref{aposterioriIRGNM}, the iterates
$$
(\xkpd,\ukpd):=\left\{\begin{array}{ll}
(\xkpd(\alpha_k),\ukpd(\alpha_k)), \ 
\alpha_k\mbox{ as in (\ref{aposterioriIRGNMk})} & \mbox{ if } \sigma_k(x_0,u_0)\geq\ol{\sigma}\\[0.5ex]
(x_0,u_0) &\mbox{ else }\end{array}\right.
$$
and the stopping index $k_*(\delta)$ are well-defined by \eqref{minIRGNM} with \eqref{defSIRGNM} and \eqref{IRGNM_aao} or \eqref{IRGNM_aao_u}. 
\item 
There exists $C>0$ independent of $\delta\in(0,\bar{\delta})$ such that
\begin{equation}\label{xaddxdagIRGNM}
\forall\delta\in(0,\bar{\delta})\, : \ \norm{x_{k_*(\delta)}^\delta-x_0}_X^r \leq 
\norm{\xdag-x_0}_X^r \text{ and }\norm{u_{k_*(\delta)}^\delta}_V\leq C
\end{equation}
in case of minimizers of \eqref{minIRGNM} with \eqref{defSIRGNM} and \eqref{IRGNM_aao} and 
\begin{equation}\label{xaddxdagIRGNM_u}
\forall\delta\in(0,\bar{\delta})\, : \ 
\calR_2(x_{k_*(\delta)}^\delta,u_{k_*(\delta)}^\delta)\leq\calR_2(\xdag,\udag) 
\end{equation}
in case of minimizers of \eqref{minIRGNM} with \eqref{defSIRGNM} and \eqref{IRGNM_aao_u}.
\item
In both cases \eqref{xaddxdagIRGNM}, \eqref{xaddxdagIRGNM_u}, the regularized approximations $(x_{k_*(\delta)}^\delta,u_{k_*(\delta)}^\delta)$ converges weakly subsequentially to a solution of  \eqref{Axu}, \eqref{Cuy} as $\delta\to0$.
\end{itemize}
\end{proposition}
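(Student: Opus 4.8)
The plan is to follow the well-established template for convergence of iteratively regularized Newton methods under the tangential cone condition, adapting it to the all-at-once operator $\bfF$ and paying attention to the fact that, in the case \eqref{IRGNM_aao}, regularization acts on $x$ only. First I would establish, by induction on $k$, that the iterates $(\xkd,\ukd)$ stay in the ball $\calB_\varrho(x_0,u_0)$ on which Assumption \ref{ass:tangcone} holds, and that the monotonicity/decrease of the data misfit $\calS(\bfF(\xkd,\ukd),(0,\ydel))$ follows from combining the tangential cone condition with the inexact Newton choice \eqref{aposterioriIRGNMk} and the closeness condition \eqref{closcondIRGNM}; this is where the constants $C_S$, $c_{tc}$, $\ul\sigma$, $\ol\sigma$, $\tau$ interact, and the signal-to-noise condition guarantees that the discrepancy principle \eqref{aposterioriIRGNM} does not stop at $k=0$ so that the construction is non-vacuous. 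Well-definedness of each minimizer of \eqref{minIRGNM} for fixed $k$ follows from Assumptions \ref{ass:compactXV}, \ref{ass:bfFweakstrongclosed}, reflexivity and uniform convexity of $X\times V$, and the case distinction (i)/(ii) that ensures weak lower semicontinuity of the linearized misfit, exactly as in the Tikhonov case; in the $x$-only case \eqref{IRGNM_aao} one additionally needs coercivity in the $u$-component, which is supplied by Assumptions \ref{ass:A_u}, \ref{ass:A_x} via the linearized analogue of the identity \eqref{idTaylor}.

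Next I would prove the monotone decay estimate for the Bregman distance $D_{\xi_k}^{(x_0,u_0)}((\xkd,\ukd),(\xdag,\udag))$ — or, in the case \eqref{IRGNM_aao}, for $\frac1r\norm{\xkd-x_0}_X^r$ — using the minimizing property of $(\xkpd,\ukpd)$ tested against $(\xdag,\udag)$, together with Assumption \ref{ass:tangcone} to control the image of $(\xdag,\udag)$ under the linearization around $(\xkd,\ukd)$ by the old misfit. This yields $\norm{\xkpd-x_0}_X^r\le\norm{\xdag-x_0}_X^r$ (respectively $\calR_2(\xkpd,\ukpd)\le\calR_2(\xdag,\udag)$) for all $k\le k_*(\delta)$, giving the first half of \eqref{xaddxdagIRGNM}, \eqref{xaddxdagIRGNM_u}. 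In the $x$-only case, the bound $\norm{u_{k_*(\delta)}^\delta}_V\le C$ is then obtained exactly as in Proposition \ref{th:convIRGNM}'s Tikhonov analogue: from the discrepancy-principle bound $\calS(\bfF(x_{k_*}^\delta,u_{k_*}^\delta),(0,\ydel))\le\tfrac{\tau\delta^o}{o}$ one reads off a bound on $\norm{A(x_{k_*}^\delta,u_{k_*}^\delta)}_{W^*}$, and then the integral identity \eqref{idTaylor} combined with Assumptions \ref{ass:A_u}, \ref{ass:A_x} (absorbing the $u$-term in case (a) using the smallness of $\norm{x_0-\xdag}_X$) closes the estimate.

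For the final bullet, the weak subsequential convergence, I would argue as follows. If $\delta_n\to0$ and $k_*(\delta_n)$ stays bounded along a subsequence, then it is eventually constant, equal to some $\bar k$, and the continuity of $\bfF$ together with the misfit bound forces the limit to solve \eqref{Axu}, \eqref{Cuy}; if $k_*(\delta_n)\to\infty$, then the noise-free iterates and the regularized ones are asymptotically close, and one uses the boundedness from \eqref{xaddxdagIRGNM}/\eqref{xaddxdagIRGNM_u} (hence weak compactness via Assumption \ref{ass:compactXV}) to extract a weakly convergent subsequence, whose limit satisfies $\bfF(\cdot)=\bfy$ by Assumption \ref{ass:bfFweakstrongclosed} once one shows the residuals tend to zero strongly — this last point follows because the stopping rule \eqref{aposterioriIRGNM} is not yet met for $k<k_*(\delta)$, so $\calS$ at the final iterate is squeezed between $\tfrac{\delta^o}{o}$-order quantities. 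The main obstacle I expect is handling the $k_*(\delta)\to\infty$ regime carefully in the all-at-once setting: unlike the classical reduced case, the "regularization on $x$ only" version lacks a Bregman-distance decay in the $u$-component, so one must run the convergence argument using the $x$-part Bregman (or norm) estimate together with the auxiliary $u$-bound from Assumptions \ref{ass:A_u}, \ref{ass:A_x}, and then recover convergence of the state via weak sequential closedness of $\bfF$ rather than via a variational source-type inequality; making the two pieces fit — especially passing to the limit in the model equation $A(\xkd,\ukd)\to0$ — is the delicate step, and it is precisely why Assumptions \ref{ass:A_u}, \ref{ass:A_x} are imposed in that case.
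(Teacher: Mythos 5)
Your proposal is correct and follows essentially the same route as the paper's proof: the paper delegates the bulk (well-definedness of the iterates and stopping index, the monotonicity estimates, and the weak subsequential convergence) to \cite[Theorem 3]{KSS09}, which is exactly the standard tangential-cone IRGNM machinery you re-derive. The only argument the paper spells out --- the $x$-bound from minimality tested against $(\xdag,\udag)$ combined with Assumption \ref{ass:tangcone}, \eqref{aposterioriIRGNM}, \eqref{aposterioriIRGNMk}, followed by the $u$-bound from the discrepancy estimate inserted into the identity \eqref{idTaylor} together with Assumptions \ref{ass:A_u}, \ref{ass:A_x} --- coincides with the one you give for the case \eqref{IRGNM_aao}.
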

\begin{proof}
See \cite[Theorem 3]{KSS09}.   

It only remains to prove boundedness of the $u$ part in case \eqref{IRGNM_aao}. Again we concentrate on the convergence part of the proof and use minimality and the tangential cone condition Assumption \ref{ass:tangcone} to obtain 
\[
\begin{aligned}
&\calS(\bfF(\xkd,\ukd)+\bfF'(\xkd,\ukd)(\xkpd-\xkd,\ukpd-\ukd),(0,\ydel)) 
+\frac{\alpha_k}{r}\norm{\xkpd-x_0}_X^r\\
&\leq 
\calS(\bfF(\xkd,\ukd)+\bfF'(\xkd,\ukd)(\xdag-\xkd,\udag-\ukd),(0,\ydel)) 
+\frac{\alpha_k}{r}\norm{\xdag-x_0}_X^r\\
&\leq C_S(1+C_Sc_{tc})\delta+C_Sc_{tc}
\calS(\bfF(\xkd,\ukd),(0,\ydel)) 
+\frac{\alpha_k}{r}\norm{\xdag-x_0}_X^r\,,
\end{aligned}
\]
which for $k\leq k_*-1$ by \eqref{aposterioriIRGNM}, \eqref{aposterioriIRGNMk}, yields
\[
\begin{aligned}
&\Bigl(\ul{\sigma}-C_S(c_{tc}+\frac{1+C_Sc_{tc}}{\tau})\Bigr) \calS(\bfF(\xkd,\ukd),(0,\ydel)) 
+\frac{\alpha_k}{r}\norm{\xkpd-x_0}_V^r\\
&\leq \frac{\alpha_k}{r}\norm{\xdag-x_0}_V^r\,,
\end{aligned}
\]
which upon division by $\alpha_k$ gives a uniform bound on $\norm{\xkpd-x_0}_V$ (namely the one stated in \eqref{xaddxdagIRGNM}; obviously, \eqref{xaddxdagIRGNM_u} can be obtained in the same manner). 
Thus up to the positive factor $\Bigl(\ul{\sigma}-C_S(c_{tc}+\frac{1+C_Sc_{tc}}{\tau})\Bigr)$ we are in the same situation as in \eqref{est1} and thus can use the same arguments to prove uniform boundedness of $u_{k_*}^\delta$. As a matter of fact, since we only consider the a posteriori regularization parameter choice now, the situation is even simpler, since we can use the upper estimate in \eqref{aposterioriIRGNM} in the identity \eqref{idTaylor} (with $(\xad,\uad)$ replaced by $(\xkd,\ukd)$).
\end{proof}

A sufficient condition for Assumption \ref{ass:tangcone} to hold is, like in the reduced case, the adjoint range invariance condition
\begin{equation}\label{eq:ari}
\bfF'(\tilde{x},\tilde{u})=\bfR_{(x,u)}^{(\tilde{x},\tilde{u})} \bfF'(x,u) 
\mbox{ with } \|\bfR_{(x,u)}^{(\tilde{x},\tilde{u})}-I\|\leq c_R<1
\end{equation}
for some $0<c_{tc}<1$ and all $(x,u),(\tilde{x},\tilde{u})\in\calD(\bfF) \cap \calB_\varrho(x_0,u_0)$, as well as linear operators $\bfR_{(x,u)}^{(\tilde{x},\tilde{u})}:W^*\times Y\to W^*\times Y$. 
More explicitly, with $\bfR_{(x,u)}^{(\tilde{x},\tilde{u})}=\left(\begin{array}{cc}R_{11}&R_{12}\\R_{21}&R_{22}\end{array}\right)$ the identity between the derivatives of $\bfF$ can be rewritten as
\begin{equation}\label{eq:adjrangeinvar}
\begin{aligned}
A_x(\tilde{x},\tilde{u})&=R_{11}A_x(x,u)\\
A_u(\tilde{x},\tilde{u})&=R_{11}A_u(x,u)+R_{12} C\\
0&= R_{21}A_x(x,u)\\
C'(\tilde{u})&= R_{21}A_u(x,u)+R_{22}C'(u)
\end{aligned}
\end{equation}
Actually, this is the condition that we will check for some of our examples.

Closely related is the following range invariance condition
\begin{assumption}\label{ass:rangeinvar} 
\begin{equation}\label{eq:ri}
\bfF'(\tilde{x},\tilde{u})= \bfF'(x,u) \bfR_{(x,u)}^{(\tilde{x},\tilde{u})} 
\mbox{ with } \|\bfR_{(x,u)}^{(\tilde{x},\tilde{u})}-I\|\leq c_R<1
\end{equation}
for some $0<c_{tc}<1$ and all $(x,u),(\tilde{x},\tilde{u})\in\calD(\bfF) \cap \calB_\varrho(x_0,u_0)$, as well as linear operators $\bfR_{(x,u)}^{(\tilde{x},\tilde{u})}:X\times V\to X\times V$.
\end{assumption} 
Again we can write the identity between the derivatives of $\bfF$ more explicitly by using the block decomposition $\bfR_{(x,u)}^{(\tilde{x},\tilde{u})}=\left(\begin{array}{cc}R_{11}&R_{12}\\R_{21}&R_{22}\end{array}\right)$:
\begin{equation}\label{eq:rangeinvar}
\begin{aligned}
A_x(\tilde{x},\tilde{u})&=A_x(x,u)R_{11}+A_u(x,u)R_{21}\\
A_u(\tilde{x},\tilde{u})&=A_x(x,u)R_{12}+A_u(x,u)R_{22}\\
0&= C'(u)R_{21}\\
C'(\tilde{u})&= C'(u)R_{22}
\end{aligned}
\end{equation}
Under this alternative condition we get convergence with an a priori choice of $\alpha_k$ and $k_*$, however only in Hilbert space. Note that convergence under such a range invariance condition in the general Banach space setting is still an open problem also for the conventional reduced formulation.
We quote a convergence result for the case of \eqref{IRGNM_aao_u}.

\begin{proposition}\label{th:convIRGNM_range}
Let $o=m=r=2$ and $X\times V$ and $W^*\times V$ be Hilbert spaces and let Assumptions 
\ref{ass:compactXV}, \ref{ass:bfFweakstrongclosed}, \ref{ass:rangeinvar} be satisfied with $c_{tc}$ sufficiently small.
Moreover, let $\tau$ be chosen sufficiently large, and let $(x_0,u_0)$ be close enough to $(\xdag,\udag)$.

Then for all $k\in\N$, the iterates
$(\xkpd,\ukpd)=
(\xkpd(\alpha_k),\ukpd(\alpha_k))$ 
with an apriori chosen sequence $(\alpha_k)_{k\in\N}$ satisfying 
\begin{equation}\label{alpha_apriori}
\alpha_k\to0\mbox{ as }k\to\infty\mbox{ and }1\leq\frac{\alpha_k}{\alpha_{k+1}}\leq C_\alpha
\end{equation}
for some $C_\alpha\geq1$ are well-defined by \eqref{minIRGNM} with \eqref{defSIRGNM} and 
\eqref{IRGNM_aao_u}, and with a choice of $k_*(\delta)$ such that 
\begin{equation}\label{kastaposteriori}
k_*(\delta)\to\infty\mbox{ and  }\frac{\delta}{\sqrt{\alpha_{k_*(\delta)}}}\to0\mbox{ as }\delta\to0
\end{equation} 
$(x_{k_*(\delta)}^\delta,u_{k_*(\delta)}^\delta)$ converges strongly to a solution of  \eqref{Axu}, \eqref{Cuy} as $\delta\to0$.
\end{proposition}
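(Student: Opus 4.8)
The plan is to reduce Proposition~\ref{th:convIRGNM_range} to the known convergence theory for the iteratively regularized Gauss--Newton method in Hilbert space under a range invariance condition, applied to the composite operator $\bfF$ in place of the reduced operator $F$. Concretely, since $o=m=r=2$ and $W^*\times V$ and $X\times V$ are Hilbert spaces, the minimization problem \eqref{minIRGNM} with \eqref{defSIRGNM} and \eqref{IRGNM_aao_u} is the classical quadratic Tikhonov step
\[
\min_{(x,u)\in\calD(\bfF)} \tfrac12\norm{\bfF(\xkd,\ukd)+\bfF'(\xkd,\ukd)(x-\xkd,u-\ukd)-\bfy^\delta}^2 + \tfrac{\alpha_k}{2}\norm{(x,u)-(x_0,u_0)}^2
\]
(using the scaling $\rho=1$, $q=2$ and identifying $\bfy^\delta=(0,\ydel)$, with the $W^*$-norm weighted by $\rho$). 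Thus I would invoke the convergence result for the IRGNM under a range invariance condition --- e.g.\ the analysis of \cite{KSS09} specialized to Hilbert space, or the corresponding classical results in \cite{KalNeuSch08} --- with the operator $\bfF$, its derivative $\bfF'$, the initial guess $(x_0,u_0)$, noise level $\delta$ (recalling $\norm{\bfy-\bfy^\delta}=\norm{y-\ydel}\le\delta$ by \eqref{delta}), and the a priori parameter sequence \eqref{alpha_apriori} together with the stopping rule \eqref{kastaposteriori}.

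The key steps, in order, are: (i) verify that the abstract hypotheses of that convergence theorem translate into the assumptions stated here --- weak sequential closedness of $\bfF$ (or of $\bfF'$ together with weak closedness of $\calD(\bfF)$, which is exactly option (i)/(ii) there) follows from Assumptions~\ref{ass:compactXV}, \ref{ass:bfFweakstrongclosed}; boundedness of $\bfF'$ on the relevant ball follows from continuous Fr\'echet differentiability; the crucial structural nonlinearity restriction is precisely Assumption~\ref{ass:rangeinvar} in the form \eqref{eq:ri}, with the smallness of $c_R$ (equivalently $c_{tc}$) providing the needed contraction; (ii) check that the source-type closeness requirement on $(x_0,u_0)$ in the cited theorem is what is meant by "$(x_0,u_0)$ close enough to $(\xdag,\udag)$" here; (iii) apply the theorem to obtain well-definedness of all iterates $(\xkpd(\alpha_k),\ukpd(\alpha_k))$ for $k\in\N$ and strong convergence of $(x_{k_*(\delta)}^\delta,u_{k_*(\delta)}^\delta)$ to a solution $(\xdag,\udag)$ of $\bfF(\xdag,\udag)=\bfy$, i.e.\ of \eqref{Axu}, \eqref{Cuy}, as $\delta\to0$ under \eqref{kastaposteriori}. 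Since regularization here is applied to the full pair $(x,u)$ via $\calR_2$, no separate treatment of the $u$-component (as was needed in Proposition~\ref{th:convIRGNM} for the case \eqref{IRGNM_aao}) is required; this is why Assumptions~\ref{ass:A_u}, \ref{ass:A_x} do not appear in the hypotheses.

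The main obstacle --- and the only point beyond a pure citation --- is the first step: confirming that the range invariance condition \eqref{eq:ri}, written out componentwise as \eqref{eq:rangeinvar}, genuinely furnishes the structural assumption under which IRGNM convergence with an a priori parameter choice is known in Hilbert space. One should note that, as remarked in the text, convergence under such a range invariance (as opposed to adjoint range invariance / tangential cone) condition is only available in the Hilbert space setting, which is exactly why $o=m=r=2$ and the Hilbert space hypotheses on $X\times V$, $W^*\times V$ are imposed; so the reduction must be to the Hilbert space version of the theory. A secondary technical point is bookkeeping of the weights $\rho$ and $q$ and the product-space norms so that the stopping rule \eqref{kastaposteriori} and the a priori condition \eqref{alpha_apriori} match the hypotheses of the quoted result; this is routine. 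With these identifications in place the conclusion --- well-definedness of the iterates and strong subsequential, indeed (by the cited theorem) strong, convergence to a solution --- follows directly.
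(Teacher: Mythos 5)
Your proposal matches the paper's proof in essence: the paper's entire argument is a citation of the Hilbert-space IRGNM convergence theory under range invariance, applied to the all-at-once operator $\bfF$ with regularization of the full pair $(x,u)$ (namely \cite[Theorem 2.7]{dpapertheo}), and, as you observe, no separate treatment of the $u$-component is needed because of \eqref{IRGNM_aao_u}. The only inaccuracy is your pointer to \cite{KSS09}, which treats the tangential cone condition with a posteriori parameter choices rather than range invariance with the a priori choice \eqref{alpha_apriori}--\eqref{kastaposteriori}; the relevant result is the one in \cite{dpapertheo}.
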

\begin{proof}
See \cite[Theorem 2.7]{dpapertheo}. 
\end{proof}

\subsection{Convergence rates}
Exactly as in the proof of the first part (case of a posteriori regularization parameter choice) of Proposition \ref{th:ratesTikh}, using \eqref{xaddxdagIRGNM_u} we get rates under a source condition (cf. \cite{KH10,Werner15} for the reduced case).
\begin{proposition}\label{th:ratesIRGNM}
Let $\bfF'$ satisfy Assumption \ref{ass:Lipschitz} and let, for and some $(\vmod,\vobs)\in W\times Y^*$ with $L\norm{(\vmod,\vobs)}<1$ the source condition
\eqref{sc_aao_u}
hold.

Then for the Newton iterates according to \eqref{IRGNM_aao_u} with the a posteriori choice \eqref{aposterioriIRGNM}, \eqref{aposterioriIRGNMk} or the a priori choice \eqref{alpha_apriori}, 
$\alpha_{k^*(\delta)}\sim\delta^{\frac{o}{t^*}}$ (cf. \eqref{apriori_rate}),  we have  
\[
D_{\xi}^{(x_0,u_0)}((x_{k_*(\delta)}^\delta,u_{k_*(\delta)}^\delta),(\xdag,\udag))
=O(\delta^{o/t})
\] 
(cf. \eqref{rate}).
\end{proposition}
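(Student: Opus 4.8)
The plan is to imitate the proof of the a posteriori part of Proposition~\ref{th:ratesTikh}, but with the Tikhonov minimality estimate replaced by the Newton-iteration estimate \eqref{xaddxdagIRGNM_u} and its a priori analogue. First I would treat the a posteriori case. From \eqref{xaddxdagIRGNM_u} we have $\calR_2(x_{k_*(\delta)}^\delta,u_{k_*(\delta)}^\delta)\leq\calR_2(\xdag,\udag)$, so the Bregman distance $D_{\xi}^{(x_0,u_0)}((x_{k_*(\delta)}^\delta,u_{k_*(\delta)}^\delta),(\xdag,\udag))$ is bounded by $-\langle\xi,(x_{k_*(\delta)}^\delta-\xdag,u_{k_*(\delta)}^\delta-\udag)\rangle$. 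Inserting the source condition \eqref{sc_aao_u}, $\xi=\bfF'(\xdag,\udag)^*(\vmod,\vobs)$, and moving the adjoint across gives $-\langle(\vmod,\vobs),\bfF'(\xdag,\udag)(x_{k_*(\delta)}^\delta-\xdag,u_{k_*(\delta)}^\delta-\udag)\rangle$, which is bounded by $\norm{(\vmod,\vobs)}$ times the left-hand side of Assumption~\ref{ass:Lipschitz} at $(x,u)=(x_{k_*(\delta)}^\delta,u_{k_*(\delta)}^\delta)$. Applying Assumption~\ref{ass:Lipschitz} yields a term proportional to $\calS(\bfF(x_{k_*(\delta)}^\delta,u_{k_*(\delta)}^\delta),(0,\ydel))^{1/t}$ plus $L\norm{(\vmod,\vobs)}\,D_\xi$, and the latter is absorbed into the left using $c=1-L\norm{(\vmod,\vobs)}>0$. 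Finally the discrepancy principle \eqref{aposterioriIRGNM} bounds $\calS(\bfF(x_{k_*(\delta)}^\delta,u_{k_*(\delta)}^\delta),(0,\ydel))\leq\tau\delta^o/o$ (and, exactly as in Proposition~\ref{th:ratesTikh}, one controls $\norm{A(x,u)-A(\xdag,\udag)}$ and $\norm{C(u)-C(\udag)}$ via the triangle inequality using $\norm{A(\xdag,\udag)}=0$ and \eqref{delta}), giving $D_\xi=O(\delta^{o/t})$.

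For the a priori case with $\alpha_{k_*(\delta)}\sim\delta^{o/t^*}$, the starting estimate is the analogue of \eqref{est1} for the Newton iteration; concretely, in the proof of Proposition~\ref{th:convIRGNM} one obtains, after division by $\alpha_k$ and using minimality together with Assumption~\ref{ass:tangcone} (or directly the a priori estimate underlying \eqref{xaddxdagIRGNM_u}), an inequality of the form $\frac{\calS(\bfF(\xkd,\ukd)+\bfF'(\xkd,\ukd)(\cdot),(0,\ydel))}{\alpha_k}+\calR_2(\cdot)\leq\frac{c_1\delta^o}{\alpha_k}+\calR_2(\xdag,\udag)$ at $k=k_*(\delta)-1$ (or $k_*(\delta)$), where the predicted misfit at the next iterate dominates, up to the tangential-cone factor $C_S c_{tc}$, the true misfit at $(x_{k_*(\delta)}^\delta,u_{k_*(\delta)}^\delta)$. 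From here one proceeds exactly as in the a priori half of Proposition~\ref{th:ratesTikh}: rewrite $\calR_2(\cdot)-\calR_2(\xdag,\udag)$ as a Bregman distance plus $\langle\xi,\cdot\rangle$, insert \eqref{sc_aao_u}, apply Assumption~\ref{ass:Lipschitz}, and use Young's inequality in the form $\norm{(\vmod,\vobs)}\bar C_L\calS^{1/t}\leq(\text{const}\cdot\alpha_{k_*(\delta)})^{1/(t-1)}+\frac{\calS}{2\alpha_{k_*(\delta)}}$ to absorb the misfit term, leaving $cD_\xi\leq\frac{c_1\delta^o}{\alpha_{k_*(\delta)}}+(\text{const}\cdot\alpha_{k_*(\delta)})^{1/(t-1)}$. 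Balancing the two right-hand terms via $\alpha_{k_*(\delta)}\sim\delta^{o/t^*}$ (using $t^*=t/(t-1)$ so that $\delta^o/\alpha_{k_*(\delta)}\sim\delta^{o/t}$ and $\alpha_{k_*(\delta)}^{1/(t-1)}\sim\delta^{o/t}$) gives the claimed $O(\delta^{o/t})$.

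The main obstacle is bookkeeping rather than anything conceptual: one has to make sure that the tangential-cone constant $c_{tc}$ and the inexact-Newton parameters $\ul\sigma,\ol\sigma,\tau$ chosen in Proposition~\ref{th:convIRGNM} are compatible with the requirement $L\norm{(\vmod,\vobs)}<1$ and that the positivity of $c=1-L\norm{(\vmod,\vobs)}$ survives the absorption of both the Bregman term coming from Assumption~\ref{ass:Lipschitz} and the misfit term coming from Young's inequality — in particular, in the a priori case the extra tangential-cone factor multiplying $\calS$ must not swamp the $\frac{1}{2\alpha}$ reserved for Young's inequality, which is why $c_{tc}$ must be taken small. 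Since all of this was already handled in Propositions~\ref{th:convIRGNM} and~\ref{th:ratesTikh}, the proof is essentially a matter of citing \eqref{xaddxdagIRGNM_u} in place of \eqref{xaddxdag_u} and repeating the argument of Proposition~\ref{th:ratesTikh} verbatim, which is presumably why the statement says ``exactly as in the proof of \ldots Proposition~\ref{th:ratesTikh}''.
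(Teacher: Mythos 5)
Your first paragraph (the a posteriori case) is exactly the paper's argument: the paper proves this proposition by the single remark that it follows as in the a posteriori part of the proof of Proposition \ref{th:ratesTikh}, with \eqref{xaddxdag_u} replaced by \eqref{xaddxdagIRGNM_u}, and your chain (Bregman distance $\leq -\langle\xi,\cdot\rangle$, source condition \eqref{sc_aao_u}, Assumption \ref{ass:Lipschitz}, absorption via $c=1-L\norm{(\vmod,\vobs)}>0$, discrepancy bound \eqref{aposterioriIRGNM} plus \eqref{delta}) is precisely that proof. That half is fine.

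The gap is in your a priori paragraph. The estimate you propose to start from (minimality of the Newton step at $k=k_*(\delta)-1$ combined with Assumption \ref{ass:tangcone}) leaves the term $C_Sc_{tc}\,\calS(\bfF(\xkd,\ukd),(0,\ydel))$ at the \emph{previous} iterate on the right-hand side. In the a posteriori setting this term is harmless because the inexact-Newton rule \eqref{aposterioriIRGNMk} allows it to be absorbed into $\ul{\sigma}\,\calS(\bfF(\xkd,\ukd),(0,\ydel))$ and the discrepancy principle \eqref{aposterioriIRGNM} bounds the final misfit by $\tau\delta^o/o$; under the purely a priori choice \eqref{alpha_apriori} neither tool is available, so a single final-step argument controls neither the previous-iterate misfit nor the quantity $\calS(\bfF(x_{k_*(\delta)}^\delta,u_{k_*(\delta)}^\delta),(0,\ydel))$ that you need to insert into Assumption \ref{ass:Lipschitz}. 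What is actually required is an induction over $k$, propagating simultaneously a bound on the Bregman distance (of order $\alpha_k^{1/(t-1)}$) and on the misfit, using $1\leq\alpha_k/\alpha_{k+1}\leq C_\alpha$ and the smallness of $c_{tc}$ and of $L\norm{(\vmod,\vobs)}$ — this is the structure of the arguments in \cite{KH10,Werner15} to which the paper points for the reduced case, and it is not "repeating Proposition \ref{th:ratesTikh} verbatim." Also, your parenthetical fallback "(or directly the a priori estimate underlying \eqref{xaddxdagIRGNM_u})" is not available: \eqref{xaddxdagIRGNM_u} was established in Proposition \ref{th:convIRGNM} only under the a posteriori rules \eqref{aposterioriIRGNM}, \eqref{aposterioriIRGNMk}, not under \eqref{alpha_apriori}.
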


\subsection{Comparison of implementation}
We compare one step of the the all-at-once IRGNM to one step of the reduced IRGNM 
\begin{equation}\label{IRGNM_red}
\min_{x\in\calD(F)}
\frac{1}{o}\norm{F(\xkdb)+F'(\xkdb)(x-\xkdb)-\ydel}_Y^o 
+\frac{\alpha}{r}\norm{x-x_0}_V^r\,,
\end{equation}
where $\tilde{\calD}=\setof{(x,u,\tilde{u})}{(\xkdb,\tilde{u})\in\calD(\bfF),\ x\in X\,, u\in V}$,
and denote the iterates resulting from the reduced version by a bar.
This can be rewritten as a PDE constrained minimization problem (setting $\tilde{u}=S(\xkdb)$, $u=\tilde{u}+S'(\xkdb)(x-\xkdb)=\tilde{u}-A_u(\xkdb,\tilde{u})^{-1}A_x(\xkdb,\tilde{u})(x-\xkdb)$
\begin{equation}\label{IRGNM_red_PDEcons}
\begin{aligned}
\min_{(x,u,\tilde{u})\in\tilde{\calD}} &\frac{1}{o}\norm{C(\tilde{u})+C'(\tilde{u})(u-\tilde{u})-\ydel}_Y^o+\frac{\alpha}{r}\norm{x-x_0}^r \\ 
\mbox{ s.t. }& \begin{cases} A(\xkdb,\tilde{u})=0\,, \\
A(\xkdb,\tilde{u})+A_x(\xkdb,\tilde{u})(x-\xkdb)+A_u(\xkdb,\tilde{u})(u-\tilde{u})=0\,,
\end{cases}
\end{aligned}
\end{equation}
in particular it contains the nonlinear model $A(\xkdb,\tilde{u})=0$ as well as the linearized one as constraints.

Again we assume $W$ to be reflexive and start with the exact penalization case $m=1$ in \eqref{minIRGNM}.
However, the PDE constrained minimization problem to which \eqref{minIRGNM} with $m=1$ and $\rho$ sufficiently large is equivalent (analogously to Proposition \ref{prop:equivTikh}) is the following
\begin{equation}\label{IRGNM_aao_PDEcons}
\begin{aligned}
\min_{(x,u,\tilde{u})\in\tilde{\calD}} &\frac{1}{o}\norm{C(\ukd)+C'(\ukd)(u-\ukd)-\ydel}_Y^o+\frac{\alpha}{r}\norm{x-x_0}^r \\
\mbox{ s.t. }&
A(\xkd,\ukd)+A_x(\xkd,\ukd)(x-\xkd)+A_u(\xkd,\ukd)(u-\ukd)=0
\end{aligned}
\end{equation}
so only the linearized model appears as a constraint here, and the problem is obviously not the same as the reduced one \eqref{IRGNM_red_PDEcons}, differently from Section \ref{sec:Tikh}.
For the sake of completeness, we point out that exact penalization indeed holds for sufficiently large but finite $\rho$ since the adjoint state $w_k$ for \eqref{IRGNM_aao_PDEcons} by Assumption \ref{ass:A_u} and Proposition \ref{th:convIRGNM} is well defined by 
\[
A_u(\xkd,\ukd)^* w_k= -C'(\ukd)^*J_o^Y(C(\ukd)+C'(\ukd)(\ukpd-\ukd)-\ydel) \text{ in }V^*
\]
and uniformly (wrt $\delta\in(0,\bar{\delta})$ and $k\leq k_*(\delta)$) bounded, 
thus it is possible to choose such a uniform penalty parameter $\rho\geq \sup_{\delta\in(0,\bar{\delta})}\max_{k\in\{1,\ldots,k_*(\delta)\}} \norm{w_k}_W$.

\medskip

The first order optimality conditions for the reduced and the all-at-once formulations \eqref{IRGNM_red_PDEcons}, \eqref{minIRGNM}, \eqref{IRGNM_aao_PDEcons} in case $\calD(\bfF)=X\times V$ read as follows:
\begin{equation}\label{KKTred_PDEconsIRGNM}
\begin{aligned}
&A(\xkdb ,\ukdb )=0\\
&A(\xkdb,\ukdb )+A_x(\xkdb,\ukdb )(\xkpdb-\xkdb)+A_u(\xkdb,\ukdb )(\ukpdb-\ukdb )=0\\
&\alpha\,J_r^X(\xkpdb-x_0)=-A_x(\xkdb,\ukdb )^* \wkpdb\\
&A_u(\xkdb,\ukdb )^* \zkdb= -(C''(\ukdb)(\ukpdb-\ukdb))^*\,J_o^Y(C(\ukdb)+C'(\ukdb)(\ukpdb-\ukdb)-\ydel)\\
&\hspace*{3cm}
-(A_{ux}(\xkdb,\ukdb)(\xkpdb-\xkdb)+A_{uu}(\xkdb,\ukdb)(\ukpdb-\ukdb))^*\wkpdb\\
&A_u(\xkdb,\ukdb)^* \wkpdb = -C'(\ukdb)^*\,J_o^Y(C(\ukdb)+C'(\ukdb)(\ukpdb-\ukdb)-\ydel)
\end{aligned}
\end{equation}
for a minimizer $(\xkpdb,\ukdb,\ukpdb)$ of the reduced version \eqref{IRGNM_red_PDEcons},
(where the equation for the Lagrange multiplier $\zkdb$ corresponding to the second PDE constraint may be skipped)
\begin{equation}\label{KKTaaoIRGNM}
\begin{aligned}
&\rho \,J_m^{W^*}(A(\xkd,\ukd)+A_x(\xkd,\ukd)(\xkpd-\xkd)+A_u(\xkd,\ukd)(\ukpd-\ukd))=\wkpd\\
&\alpha\,J_r^X(\xkpd-x_0)=-A_x(\xkd,\ukd)^* \wkpd\\
&A_u(\xkd,\ukd)^* \wkpd= -C'(\ukd)^*\,J_o^Y(C(\ukd)+C'(\ukd)(\ukpd-\ukd)-\ydel)
\end{aligned}
\end{equation}
for a minimizer $(\xkpd,\ukpd)$ of the all-at-once version \eqref{minIRGNM}, 
and 
\begin{equation}\label{KKTaao_PDEconsIRGNM}
\begin{aligned}
&A(\xkd,\ukd)+A_x(\xkd,\ukd)(\xkpd-\xkd)+A_u(\xkd,\ukd)(\ukpd-\ukd))=0\\
&\alpha\,J_r^X(\xkpd-x_0)=-A_x(\xkd,\ukd)^* \wkpd\\
&A_u(\xkd,\ukd)^* \wkpd= -C'(\ukd)^*\,J_o^Y(C(\ukd)+C'(\ukd)(\ukpd-\ukd)-\ydel)
\end{aligned}
\end{equation}
for a minimizer $(\xkpd,\ukpd)$ of the all-at-once version \eqref{minIRGNM} in case $m=1$ and $\rho$ sufficiently large, i.e., of \eqref{IRGNM_aao_PDEcons}.

Justification of these first order optimality conditions by Fr\'{e}chet differentiability of $A$ and $C$ and a Zowe Kurcyuscz constraint qualification is again trivial in case of \eqref{minIRGNM} with $m>1$.  
In case of \eqref{IRGNM_red_PDEcons}, we need surjectivity of 
\[
G'(x,u,\tilde{u})=\left(\begin{matrix}
0&0&A_u(\xkdb,\tilde{u})\\ 
A_x(\xkdb,\tilde{u})&A_u(\xkdb,\tilde{u})&A_{xu}(\xkdb,\tilde{u})(x-\xkdb)+A_{xu}(\xkdb,\tilde{u})(u-\tilde{u})\end{matrix}\right)
\] 
at $(x,u,\tilde{u})=(\xkpdb,\ukpdb,\ukdb)$
and in case of \eqref{IRGNM_aao_PDEcons} surjectivity of 
\[
G'(x,u)=\left(\begin{matrix} A_x(\xkd,\ukd)&A_u(\xkd,\ukd)\end{matrix}\right)
\] 
at $(x,u)=(\xkpd,\ukpd)$, so that in both cases the constraint qualifications are satisfied under Assumption \ref{ass:A_u}.

In case of additional convex constrains on the parameters $\calD(\bfF)=\calC\times V$ with some convex set $\calC$, the second lines in \eqref{KKTred_PDEconsIRGNM}, \eqref{KKTaaoIRGNM}, \eqref{KKTred_PDEconsIRGNM} again change to variational inequalities of the form \eqref{var}.

\section{Landweber iteration}\label{sec:LW}
Gradient methods applied to the unregularized least squares problem (i.e., \eqref{Tikh_red} or \eqref{minTikh} with $\alpha=0$) lead to the Landweber iteration in a reduced
\begin{equation}\label{LW_red}
\begin{aligned}
&\xskpdb=\xskdb-\mu_k F'(\xkdb)^*\, J_o^Y(F(\xkdb)-\ydel)\\
&\xkpdb=J_{r^*}^{X^*}(\xskpdb)
\end{aligned}
\end{equation}
and an all-at-once setting
\begin{equation}\label{LW_aao}
\begin{aligned}
&(\xskpd,\uskpd)=(\xskd,\uskd)-\mu_k \bfF'(\xkd,\ukd)^*\, (J_m^{W^*},J_o^Y)(\bfF(\xkd,\ukd)-(0,\ydel))\\
&(\xkpd,\ukpd)=(J_{r^*}^{X^*}(\xskpd),J_{q^*}^{V^*}(\uskpd))
\end{aligned}
\end{equation}
with appropriately chosen stepsizes $\mu_k$, cf., e.g., \cite{KSS09}.
We here assume that $X$ and $V$ are smooth and $s$-convex so that the duality mappings $J_{r^*}^{X^*}$, $J_{q^*}^{V^*}$ are single valued and in fact inverses of $J_{r}^{X}$, $J_{q}^{V}$, respectively. Moreover, we postulate continuous Fr\'{e}chet differentiability of $A$ and $C$, and, for \eqref{LW_red}, also Assumption \ref{ass:A_u} to guarantee well-definedness of $F$. 

Under a tangential cone condition 
\[
\begin{aligned}
\norm{F(\tilde{x})+F'(\tilde{x})(x-\tilde{x})-F(x)}_Y
\leq
c_{tc}\norm{F(\tilde{x})-F(x)}_Y
\end{aligned}
\]
for some $0<c_{tc}<1$ and all $x\in\calB_\varrho(x_0)$
in the reduced case \eqref{LW_red}
or Assumption \ref{ass:tangcone} in the all-at-once case \eqref{LW_aao}, we have convergence of the iterates $\bar{x}_k^0$ or $(x_k^0,u_k^0)$ to a solution of \eqref{Axu}, \eqref{Cuy} as $k\to\infty$ in the exact case $\delta=0$. Under the same conditions, with additionally $Y$ or $W^*\times Y$ being uniformly smooth, an a posteriori stopping rule according to 
\[
k_*=\min\{k\in\N \ : \ \norm{F(\xkdb),\ydel)}_Y
\leq \tau \delta\} \,,
\]
for \eqref{LW_red} 
or \eqref{aposterioriIRGNM} for \eqref{LW_aao}
yields convergence of $\bar{x}_{k_*(\delta)}^\delta$ or $(x_{k_*(\delta)}^\delta,u_{k_*(\delta)}^\delta)$ to a solution of \eqref{Fxy} or \eqref{Axu}, \eqref{Cuy} as $\delta\to0$ holds.
This is an immediate consequence of Theorems 1 and 2 in \cite{KSS09}.
Note that even if the all-at-once tangential cone condition Assumption \ref{ass:tangcone} might be hard to verify for nonlinear problems, Landweber still makes sense for linear problems where this condition is trivially satisfied. And it actually makes a big difference to the reduced version also in the linear case as we will see in the following. 

\subsection{Comparison of implementation}
Although we do not deal with minimization problems now, each reduced Landweber step can be split into a system that resembles \eqref{KKTred_PDEcons}, but is has triangular (so not fully coupled) structure.
Namely, using the definition of $F=C\circ S$, the chain rule, and the Implicit Function Theorem for differentiating $S$
\[
A(x,S(x))=0\,,\quad A_x(x,S(x))+A_u(x,S(x))S'(x)=0\,,
\] 
i.e., 
\[  
F'(x)^*\, J_o^Y(F(x)-\ydel)=-A_x(x,S(x))^*(A_u(x,S(x))^*)^{-1}C'(S(x))^*\, J_o^Y(C(S(x))-\ydel)
\]
and setting $\ukdb=S(\xkdb)$, $\wkdb=-(A_u(\xkdb,\ukdb)^*)^{-1}C'(\ukdb)^*\, J_o^Y(C(\ukdb)-\ydel)$ we can rewrite one reduced Landweber step as 
\[
\begin{aligned}
&A(\xkdb,\ukdb)=0\\
&A_u(\xkdb,\ukdb)^*\wkdb=-C'(\ukdb)^*\, J_o^Y(C(\ukdb)-\ydel)\\
&\xskpdb=\xskdb-\mu_k A_x(\xkdb,\ukdb)^*\wkdb\\
&\xkpdb=J_{r^*}^{X^*}(\xskpdb)
\end{aligned}
\]
which involves solution of a nonlinear and a linearized (adjoint) model,
whereas the all-at-once version using
\[
\bfF'(x,u)^*(J_m^{W^*},J_o^Y)(\bfF(x,u)-(0,\ydel))=
\left(\begin{matrix}
A_x(x,u)&A_u(x,u)\\
0&C'(u)
\end{matrix}\right)^*
\left(\begin{matrix}
J_m^{W^*}(A(x,u))\\
J_o^Y(C(u)-\ydel))
\end{matrix}\right)
\]
reads
\[
\begin{aligned}
&\xskpd=\xskd-\mu_k A_x(\xskd,\uskd)^*J_m^{W^*}(A(\xskd,\uskd))\\
&\uskpd=\uskd-\mu_k \Bigl(A_u(\xskd,\uskd)^*J_m^{W^*}(A(\xskd,\uskd))+C'(u)^*J_o^Y(C(u)-\ydel)\Bigr)\\
&(\xkpd,\ukpd)=(J_{r^*}^{X^*}(\xskpd),J_{q^*}^{V^*}(\uskpd))\,,
\end{aligned}
\]
(where again we have assumed $W$ to be reflexive).
So as opposed to all other schemes considered here and to the reduced Landweber setting, in an all-at-once Landweber step no linear or nonlinear model is solved.  
 
\section{Application and Numerical Tests}\label{sec:numex}
Since the main difference in implementation seems to arise in the context of nonlinear PDEs (at least as far as the IRGNM is concerned) we consider an inverse source problem for a nonlinear PDE, namely, identification of $b$ in 
\begin{equation}\label{nlbex}
-\Delta u+\xi u^3=b \mbox{ in }\Omega\,, \quad u=0\mbox{ on }\partial\Omega
\end{equation}
from measurements of $u$ in $\Omega\subseteq\R^d$, $d\in\{1,2,3\}$.
Here  $\xi\in\R$ is a known parameter, which we use in order to study different strengths of nonlinearity of the PDE and also the non-elliptic case arising when $\xi$ is smaller than the negative of the embedding constant $H_0^1(\Omega)\to L^4(\Omega)$, so that the parameter-to-state map fails to be well-defined and therefore the reduced approach is not applicable.
This example has already been used in \cite{KKV14b} to compare reduced Tikhonov regularization with an all-at-once version of the IRGNM, both with adaptive discretization.

\subsection{Verification of convergence conditions}
Indeed, even a more general version of this example satisfies the structural conditions on the forward operator \eqref{eq:adjrangeinvar}, \eqref{eq:rangeinvar} for the all-at-once IRGNM, and also their reduced versions.

\begin{lemma}\label{lem:nlsc}
Let $A$, $C$  in \eqref{Axu}, \eqref{Cuy} have the form $A(x,u)=\hat{A}(u)+Lx$, $C(u)=Cu$ with $L:X\to W^*$, $C:V\to Y$ linear operators, $\hat{A}:V\to W^*$ possibly nonlinear and Fr\'{e}chet differentiable, and let $C^\dagger$, $L^\dagger$ denote the Moore Penrose generalized inverse of $C$ and $L$, respectively.
\begin{itemize}
\item[(a)] If $\mathcal{N}(\hat{A}'(\tilde{u})-\hat{A}'(u))^\bot\subseteq \mathcal{N}(C)^\bot$, then \eqref{eq:adjrangeinvar} is satisfied with 
\[ \|\bfR_{(x,u)}^{(\tilde{x},\tilde{u})}-I\|= \|(\hat{A}'(\tilde{u})-\hat{A}'(u))C^\dagger\| \]
\item[(b)] If $\mathcal{R}(\hat{A}'(\tilde{u})-\hat{A}'(u))\subseteq \overline{\mathcal{R}(L)}$, then \eqref{eq:rangeinvar} is satisfied with 
\[ \|\bfR_{(x,u)}^{(\tilde{x},\tilde{u})}-I\|= \|L^\dagger(\hat{A}'(\tilde{u})-\hat{A}'(u))\| \]
\item[(c)] If $\hat{A}'(u)$ is boundedly invertible, $\hat{A}'$ is continuous, and $\mathcal{N}(C)=\{0\}$, then for all $\tilde{x}$ in a sufficiently small neighborhood of $x$, the reduced adjoint range invariance condition 
\begin{equation}\label{eq:adjrangeinvar_red}
F'(\tilde{x})=R_{x}^{\tilde{x}} F'(x)
\end{equation}
is satisfied with 
\[ \|R_{x}^{\tilde{x}}-I\|= \sqrt{\|C ( \hat{A}'(S(x))-\hat{A}'(S(\tilde{x})) )\hat{A}'(S(\tilde{x}))^{-1}C^\dagger\|^2+\|\mbox{Proj}_{\mathcal{R}(C)^\bot}\|^2} \]
\item[(d)] If $\hat{A}'(u)$ is boundedly invertible, $\hat{A}'$ is continuous, and $\overline{\mathcal{R}(L)}=W^*$, then the reduced range invariance condition 
\begin{equation}\label{eq:rangeinvar_red}
F'(\tilde{x})=F'(x) R_{x}^{\tilde{x}} 
\end{equation}
is satisfied with 
\[ \|R_{x}^{\tilde{x}}-I\|= \sqrt{\|L^\dagger ( \hat{A}'(S(x))-\hat{A}'(S(\tilde{x})) )\hat{A}'(S(\tilde{x}))^{-1} L\|^2+\|\mbox{Proj}_{\mathcal{N}(L)}\|^2} 
\]
\end{itemize}
\end{lemma}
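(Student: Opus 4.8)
The plan is to verify each of the four claimed identities by directly constructing the operator $\bfR$ (or $R$) and computing the residual $\bfR - I$. Since $A(x,u) = \hat A(u) + Lx$ and $C(u) = Cu$ are affine, their Fréchet derivatives simplify dramatically: $A_x(x,u) = L$ and $A_u(x,u) = \hat A'(u)$ (both independent of the first argument, and $A_x$ independent of $u$), while $C'(u) = C$ is constant. Plugging these into the block systems \eqref{eq:adjrangeinvar} and \eqref{eq:rangeinvar} collapses most equations to triviality and isolates the one nontrivial requirement in each case.

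For part (a): in \eqref{eq:adjrangeinvar} the first equation becomes $L = R_{11} L$, the third $0 = R_{21} L$, and the fourth $C = R_{21}\hat A'(u) + R_{22} C$; the only genuine condition is $\hat A'(\tilde u) = R_{11}\hat A'(u) + R_{12} C$. I would take $R_{11} = I$, $R_{21} = 0$, $R_{22} = I$ — which satisfies the first, third, and fourth equations — and solve the second for $R_{12}$ via $R_{12} C = \hat A'(\tilde u) - \hat A'(u)$, i.e. $R_{12} = (\hat A'(\tilde u) - \hat A'(u)) C^\dagger$. The hypothesis $\mathcal N(\hat A'(\tilde u) - \hat A'(u))^\bot \subseteq \mathcal N(C)^\bot$ is exactly what is needed for $(\hat A'(\tilde u)-\hat A'(u)) C^\dagger C = \hat A'(\tilde u)-\hat A'(u)$, because $C^\dagger C$ is the orthogonal projection onto $\mathcal N(C)^\bot = \mathcal R(C^*)$, and the inclusion guarantees that $\hat A'(\tilde u) - \hat A'(u)$ annihilates $\mathcal N(C)$ and hence is unchanged by this projector. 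Then $\bfR - I$ has only the $(1,2)$ block nonzero, equal to $R_{12}$, so $\|\bfR - I\| = \|R_{12}\|$, giving the stated formula. Part (b) is completely analogous using the transposed block structure \eqref{eq:rangeinvar}: the nontrivial equation is $A_u(\tilde u) = A_x R_{12} + A_u R_{22}$, i.e. $\hat A'(\tilde u) = L R_{12} + \hat A'(u) R_{22}$; take $R_{22}=I$, $R_{11}=I$, $R_{21}=0$ and set $R_{12} = L^\dagger(\hat A'(\tilde u) - \hat A'(u))$, where now the hypothesis $\mathcal R(\hat A'(\tilde u)-\hat A'(u)) \subseteq \overline{\mathcal R(L)}$ ensures $L L^\dagger (\hat A'(\tilde u)-\hat A'(u)) = \hat A'(\tilde u) - \hat A'(u)$ since $L L^\dagger$ is the projection onto $\overline{\mathcal R(L)}$.

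For parts (c) and (d) I would pass to the reduced operator $F = C \circ S$, where $S$ now satisfies $\hat A(S(x)) + Lx = 0$, so by implicit differentiation $S'(x) = -\hat A'(S(x))^{-1} L$ (well-defined since $\hat A'$ is boundedly invertible) and thus $F'(x) = -C\hat A'(S(x))^{-1} L$. For (c) I want $R$ with $F'(\tilde x) = R F'(x)$; since $F'(x)$ need not be surjective onto $Y$, I would build $R$ block-wise relative to the decomposition $Y = \mathcal R(C) \oplus \mathcal R(C)^\bot$ — on $\mathcal R(C)$ use the obvious factor $C(\hat A'(S(x))-\hat A'(S(\tilde x)))\hat A'(S(\tilde x))^{-1}C^\dagger$ coming from comparing the two instances of $F'$, and on $\mathcal R(C)^\bot$ use the projector $\mathrm{Proj}_{\mathcal R(C)^\bot}$; the injectivity hypothesis $\mathcal N(C) = \{0\}$ makes $C^\dagger C = I$ on $V$ so the factorization on $\mathcal R(C)$ is exact. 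Computing $R - I$ then yields the two contributions under the square root. Part (d) is the mirror image: $F'(\tilde x) = F'(x) R$, with $R$ decomposed relative to $X = \mathcal N(L)^\bot \oplus \mathcal N(L)$, using $L^\dagger L = I$ on $\mathcal N(L)^\bot$ and $\overline{\mathcal R(L)} = W^*$ to invert the roles. The continuity of $\hat A'$ enters in (c) and (d) only to guarantee that for $\tilde x$ near $x$ the operator $\bfR - I$ (or $R-I$) is small — this is needed to meet the $c_R < 1$ requirement in \eqref{eq:ari}, \eqref{eq:ri}, but the identity for $\|\bfR - I\|$ itself holds regardless.

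The routine part is the block bookkeeping; the one place requiring care — and the main obstacle — is parts (c) and (d), where $F'(x)$ is not boundedly invertible, so one cannot simply write $R = F'(\tilde x) F'(x)^{-1}$. The resolution is to work with the Moore–Penrose inverse $F'(x)^\dagger$ instead and accept that the factorization is only exact on $\mathcal R(F'(x))$ (resp. $\mathcal N(F'(x))^\bot$), absorbing the complementary projection into $R$; the Pythagorean combination of the two operator norms (the "genuine" factor and the projector) is precisely why the stated constants appear under a square root rather than as a simple operator norm, and one should check that the two blocks act on orthogonal subspaces so that $\|R - I\|^2$ really is the sum of the two squared norms.
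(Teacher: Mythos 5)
Your plan follows essentially the same route as the paper: the paper's proof merely records the block choices $R_{11}=I$, $R_{12}=(\hat{A}'(\tilde{u})-\hat{A}'(u))C^\dagger$ (resp.\ $R_{12}=L^\dagger(\hat{A}'(\tilde{u})-\hat{A}'(u))$), $R_{21}=0$, $R_{22}=I$ for (a), (b) and $R_{x}^{\tilde{x}}=C\hat{A}'(S(x))\hat{A}'(S(\tilde{x}))^{-1}C^\dagger$, $R_{x}^{\tilde{x}}=L^\dagger\hat{A}'(S(x))\hat{A}'(S(\tilde{x}))^{-1}L$ for (c), (d) --- exactly the operators you construct --- and declares the identities ``readily checked.'' Your extra observations on $C^\dagger C$ and $LL^\dagger$ acting as orthogonal projectors onto $\mathcal{N}(C)^\bot$ and $\overline{\mathcal{R}(L)}$, and on handling $\mathcal{R}(C)^\bot$ and $\mathcal{N}(L)$ separately in (c), (d), are consistent with (and slightly more explicit than) the paper's argument.
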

\begin{proof}
Since the derivatives here simplify to $A_x(x,u)=L$, $A_u(x,u)=\hat{A}'(u)$, $C'(u)=C$, the assertions can be readily checked by setting
\[
\begin{aligned}
&(a)\quad R_{11}=I\,, \quad R_{12}=(\hat{A}'(\tilde{u})-\hat{A}'(u))C^\dagger\,, \quad R_{21}=0\,, \quad R_{22}=I\\
&(b)\quad R_{11}=I\,, \quad R_{12}=L^\dagger(\hat{A}'(\tilde{u})-\hat{A}'(u))\,, \quad R_{21}=0\,, \quad R_{22}=I\\
&(d)\quad R_{x}^{\tilde{x}}=C\hat{A}'(S(x))\hat{A}'(S(\tilde{x}))^{-1}C^\dagger\\
&(c)\quad R_{x}^{\tilde{x}}=L^\dagger\hat{A}'(S(x))\hat{A}'(S(\tilde{x}))^{-1}L
\end{aligned}
\]
\end{proof}

The above example \eqref{nlbex} fits into this framework with 
\[
\begin{aligned}
&X=L^2(\Omega)\,, \ V=H^2(\Omega)\cap H_0^1(\Omega)\,, \ W=L^2(\Omega)\,, \ Y= L^2(\Omega)\\
&\hat{A}(u)=-\Delta u+\zeta u^3\,, \ L=-\mbox{id}
\end{aligned}
\]
and therefore satisfies \eqref{eq:rangeinvar}, and in the elliptic case $\xi>\ul{\xi}=-\|\mbox{id}_{H_0^1(\Omega)\to L^4(\Omega)}\|$ also \eqref{eq:rangeinvar_red}, provided $C$ is linear. In case of full measurements relative to the model $A$, i.e., if $\mathcal{N}(C)\subseteq\mathcal{N}(\hat{A}'(u))$ for all $(x,u)\in\calD(\bfF) \cap \calB_\varrho(x_0,u_0)$, also \eqref{eq:adjrangeinvar} holds and if $\xi>\ul{\xi}$ and even $\mathcal{N}(C)=\{0\}$, $\overline{\mathcal{R}(C)}=Y$ hold, then \eqref{eq:adjrangeinvar_red} is satisfied.

\subsection{Numerical experiments}
In the following, we show numerical results for this example in the quadratic (i.e., $m=o=r=2$) Hilbert space setting in the one-dimensional situation $\Omega=(0,1)$ with a simple finite difference discretization on an equidistant grid of size $0.01$. Computational results in 2-d with an adaptive discretization can be found in \cite{KKV14b}.

Table \ref{tab:compIRGNM} shows a comparison of the reduced and the all-at-once versions of the IRGNM for different values of the nonlinearity parameter $\xi$ with one per cent noise in the data.  Here $\mbox{it}_{\mbox{\footnotesize aao}}$, $\mbox{it}_{\mbox{\footnotesize red}}$, $\mbox{cpu}_{\mbox{\footnotesize aao}}$, $\mbox{cpu}_{\mbox{\footnotesize red}}$ denote the number of iterations and CPU times (in seconds) for the all-at-once and the reduced version, respectively. The corresponding reconstructions are displayed in Figure \ref{fig:compIRGNM}.
In both cases the sequence of regularization parameters was chosen as $\alpha_k=10* 0.7^{k}$ and the constant functions with value zero were used as starting values for $b$ (and $u$). As expected, for larger values of $\xi$, i.e., stronger nonlinearity, the all-at-once version performs better. Also for negative values of $\xi$ with modulus larger than the reciprocal norm of the embedding $H_0^1(\Omega)\to L^4(\Omega)$, for which the parameter-to state-map does not exist (and the reduced IRGNM actually fails) we still get  a reasonable behavior of  the all-at-once IRGNM.

\begin{table}
\begin{tabular}{|l|l|l|l|l|l|l|l|}
\hline
$\xi$&$\tau^2$&$\mbox{it}_{\mbox{\footnotesize aao}}$&$\mbox{it}_{\mbox{\footnotesize red}}$& $\mbox{cpu}_{\mbox{\footnotesize aao}}$& $\mbox{cpu}_{\mbox{\footnotesize red}}$& 
$\frac{\|b_{k_*(\delta),\mbox{\footnotesize aao}}^\delta-b^\dagger\|_X}{\|b^\dagger\|_X}$ 
&
$\frac{\|b_{k_*(\delta),\mbox{\footnotesize red}}^\delta-b^\dagger\|_X}{\|b^\dagger\|_X}$ 
\\
\hline
   0 &4 & 34 & 32 & 0.14 & 0.10 & 0.0149 & 0.0151\\
  10 &20& 43 & 43 & 0.20 & 0.55 & 0.0996 & 0.1505\\
 100 &20& 55 & 56 & 0.28 & 0.82 & 0.0721 & 0.0770\\
1000 &20& 68 & 68 & 0.42 & 1.07 & 0.0543 & 0.0588\\
-0.5 &4 & 33 & 32 & 0.13 & 0.35 & 0.1174 & 0.2165\\
-1.  &4 & 35 & - & 0.23 & - & 0.2023 & -\\
-10  &4 & 44 & - & 0.23 & - & 0.0768 & -\\
-100 &4 & 77 & - & 0.59 & - & 0.2246 & -\\
-1000&4 & 70 & - & 0.49 & - & 0.0321 & -\\
\hline
\end{tabular}
\caption{Comparison of reduced and all-at-once IRGNM \label{tab:compIRGNM}}
\end{table}

\begin{figure}
\begin{tabular}{ccc}
$\xi=1000$ &  $\xi=-0.5$ & $\xi=-1000$\\
\includegraphics[width=0.3\textwidth]{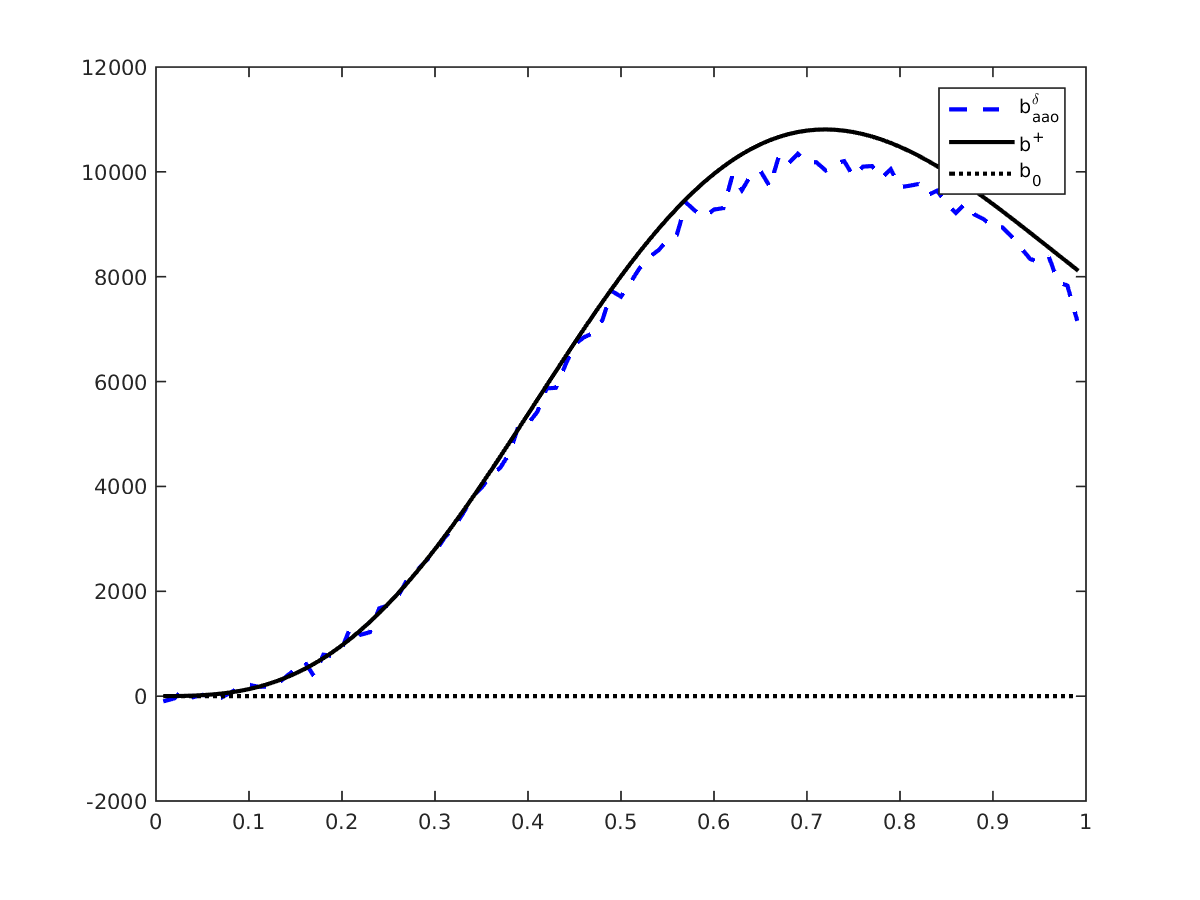}&
\includegraphics[width=0.3\textwidth]{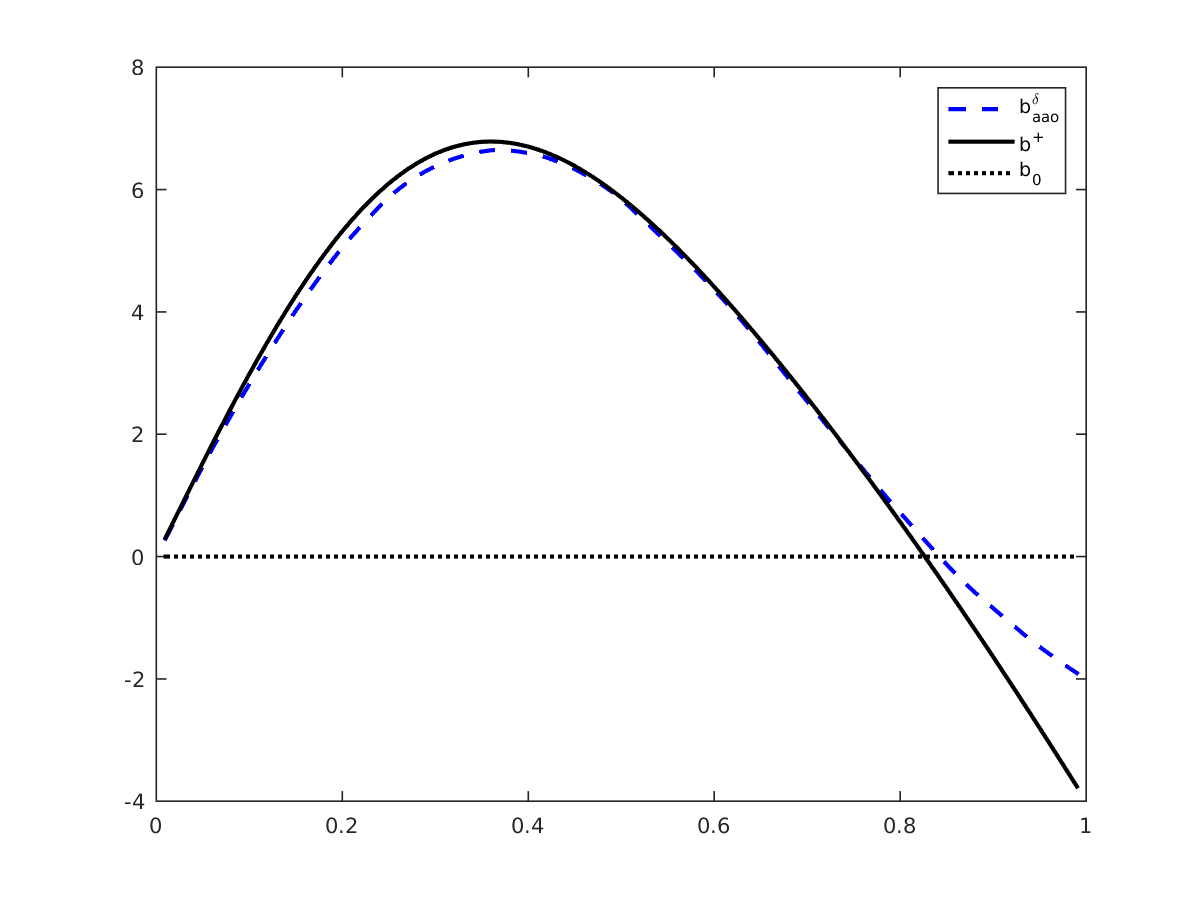}&
\includegraphics[width=0.3\textwidth]{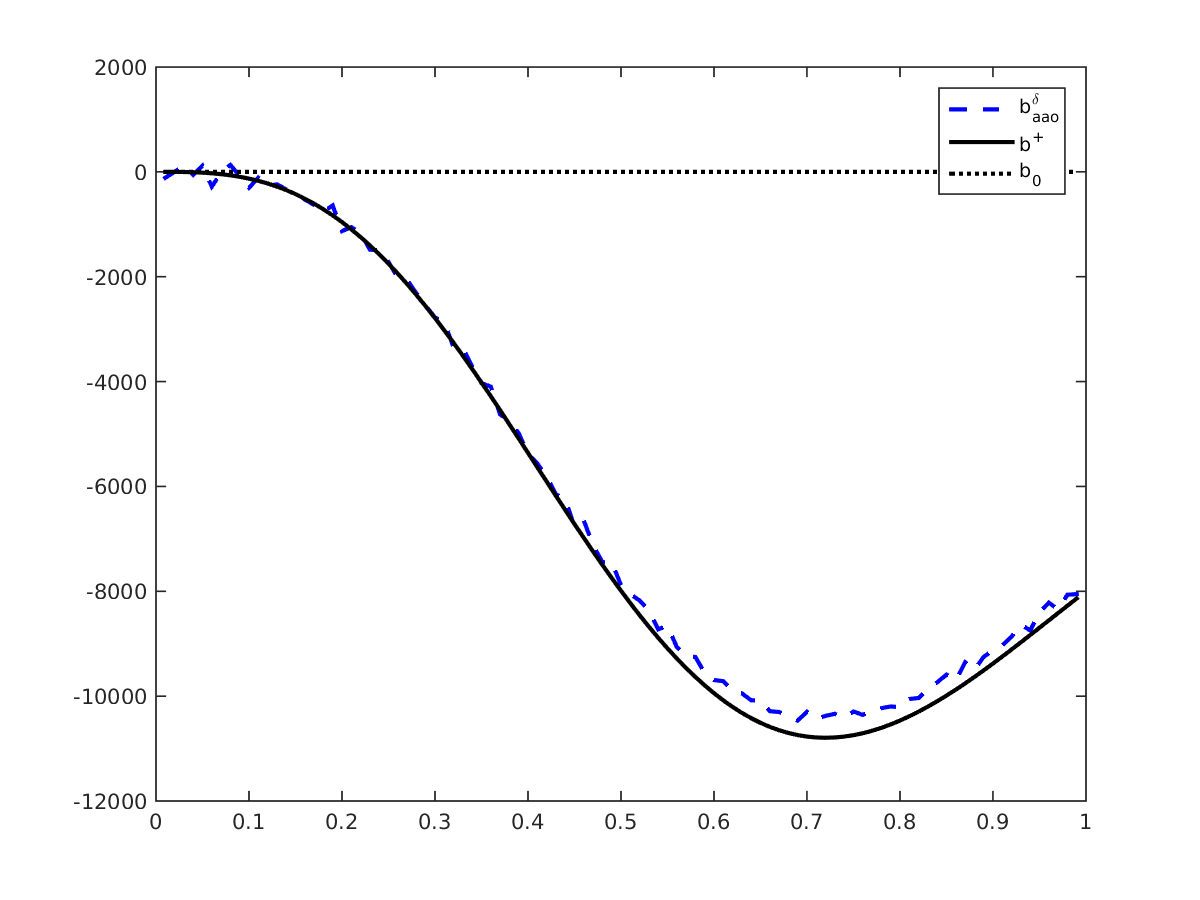}
\\
\includegraphics[width=0.3\textwidth]{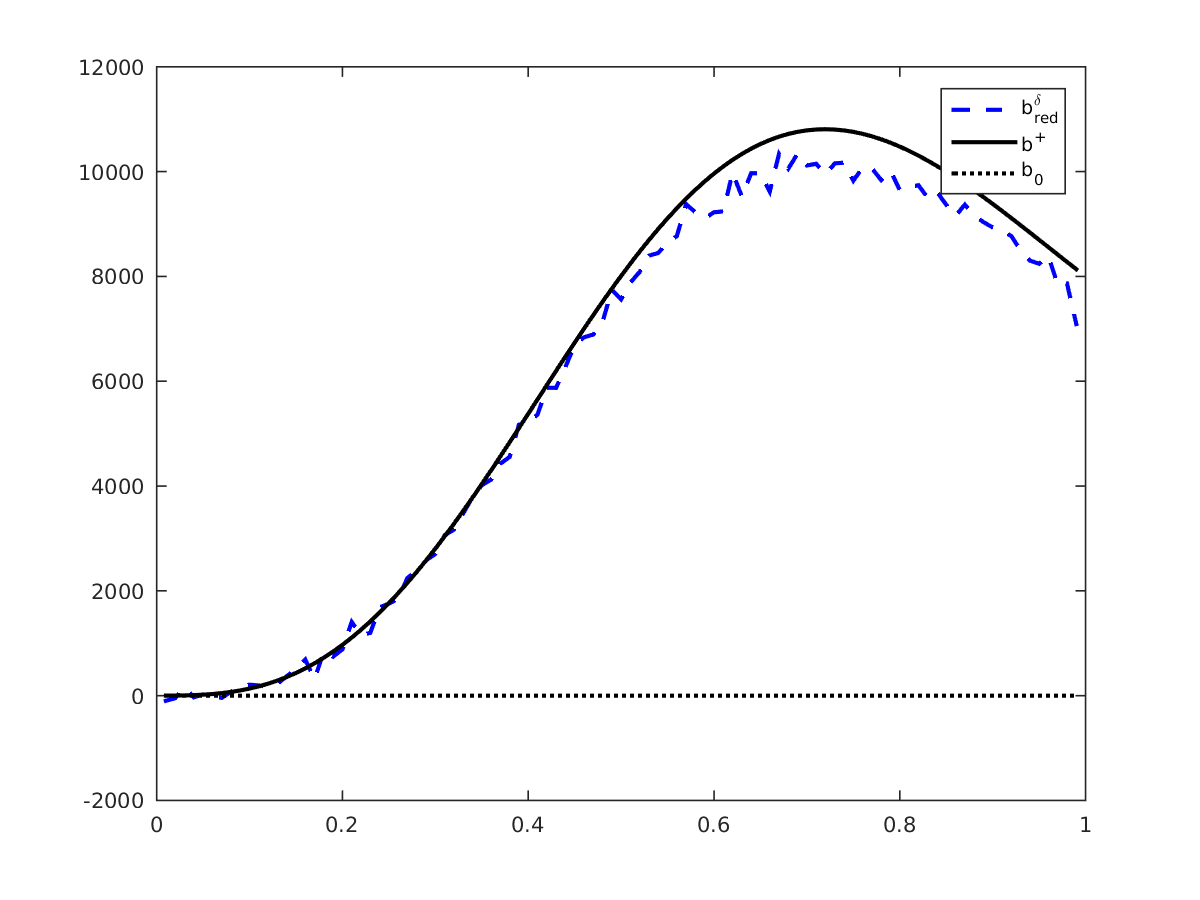}&
\includegraphics[width=0.3\textwidth]{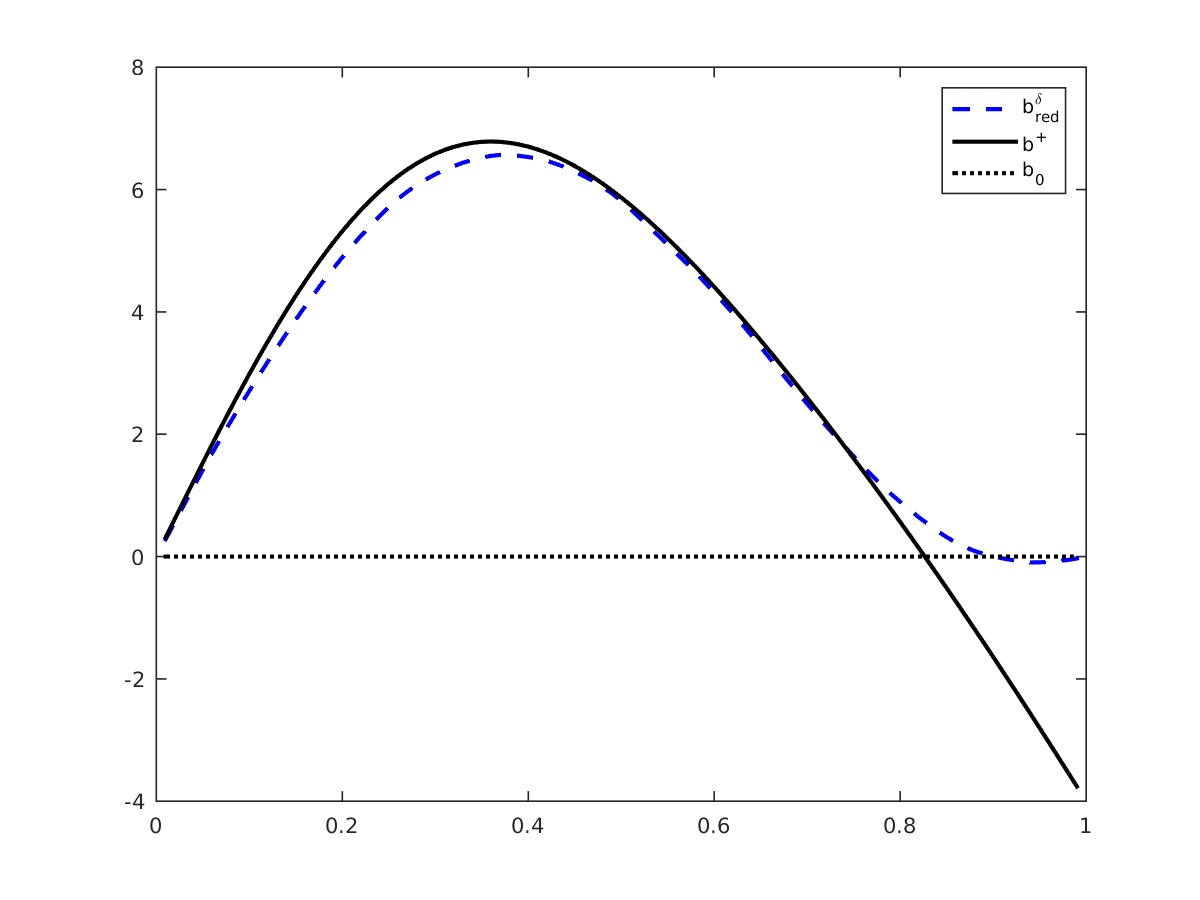}&
\includegraphics[width=0.3\textwidth]{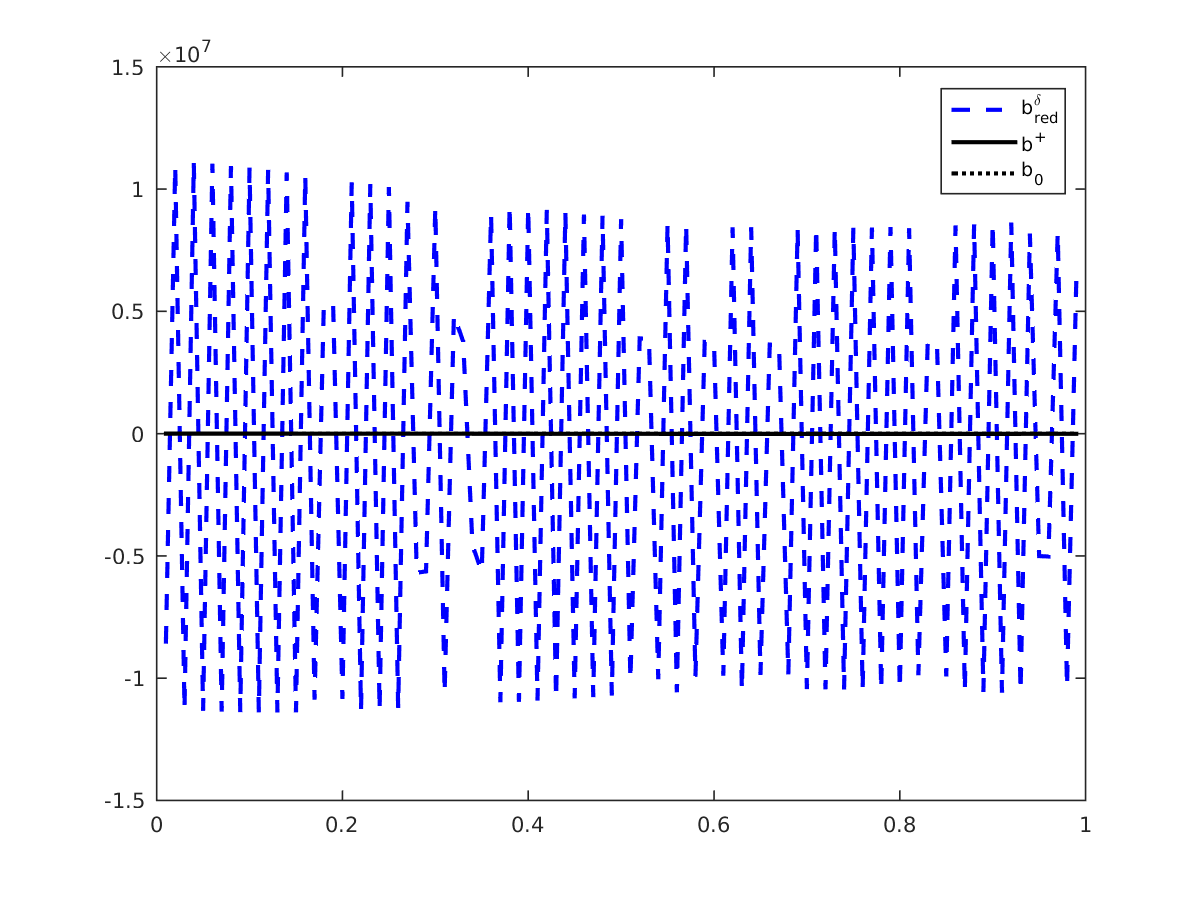}
\end{tabular}
\caption{Comparison of reduced (bottom row) and all-at-once (top row) IRGNM \label{fig:compIRGNM}}
\end{figure}

A similar comparison is done for the  Landweber iteration in Table \ref{tab:compLW} and Figure~\ref{fig:compLW}, however, with less choices of $\xi$ since here already for a relatively moderate (positive) nonlinearity the all-at-once Landweber iteration fails, which is no surprise, since it does not contain solution of models at all. For negative values of $\xi$, like in the IRGNM tests, the reduced version only works with sufficiently small $|\xi|$, whereas the all-at-once version is also able to cope with absolutely slightly larger negative values of $\xi$. 

\begin{table}
\begin{tabular}{|l|l|l|l|l|l|l|l|}
\hline
$\xi$&$\tau^2$&$\mbox{it}_{\mbox{\footnotesize aao}}$&$\mbox{it}_{\mbox{\footnotesize red}}$& $\mbox{cpu}_{\mbox{\footnotesize aao}}$& $\mbox{cpu}_{\mbox{\footnotesize red}}$& 
$\!\!\!\frac{\|b_{k_*(\delta),\mbox{\footnotesize aao}}^\delta-b^\dagger\|_X}{\|b^\dagger\|_X}\!\!\!$ &
$\!\!\!\frac{\|b_{k_*(\delta),\mbox{\footnotesize red}}^\delta-b^\dagger\|_X}{\|b^\dagger\|_X}\!\!\!$ \\
\hline
0.5 & 4 &    5178 &   2697 &    2.97 &  18.07 & 0.0724 & 0.1047\\
 5  & 4 & 2000000 &  48510 & 1293.60 & 482.19 & 0.7837 & 0.1633\\
10  & 4 & 2000000 & 100000 & 1257.50 & 639.87 & 0.9621 & 0.1632\\
-0.5& 4 &   10895 &   2016 &    8.85 &  14.55 & 0.1406 & 0.2295\\
-1  & 4 &   18954 &      - &   11.42 &      - & 0.2313 & - \\  
\hline
\end{tabular}
\caption{Comparison of reduced and all-at-once Landweber \label{tab:compLW}}
\end{table}

\begin{figure}
\begin{tabular}{ccc}
$\xi=5$ & $\xi=0.5$ & $\xi=-0.5$\\
\includegraphics[width=0.3\textwidth]{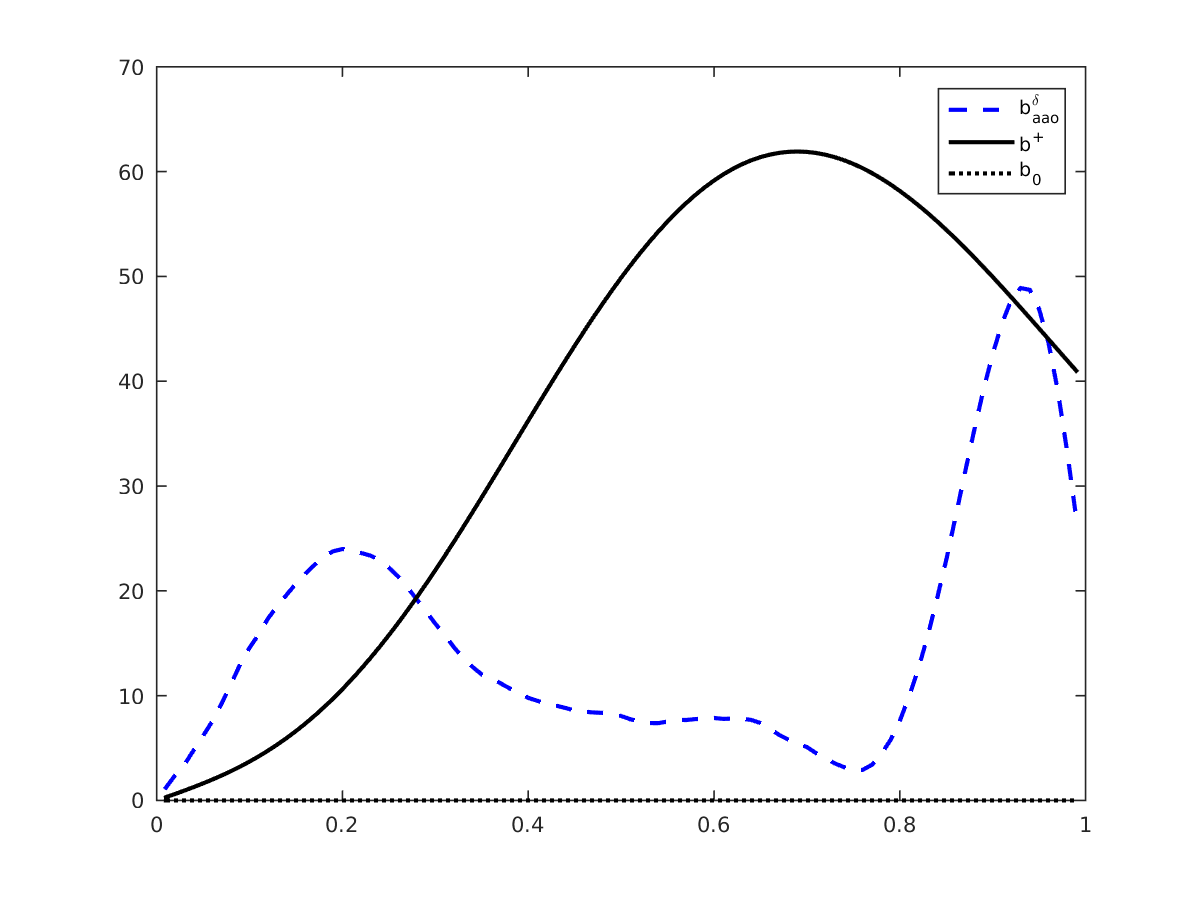}&
\includegraphics[width=0.3\textwidth]{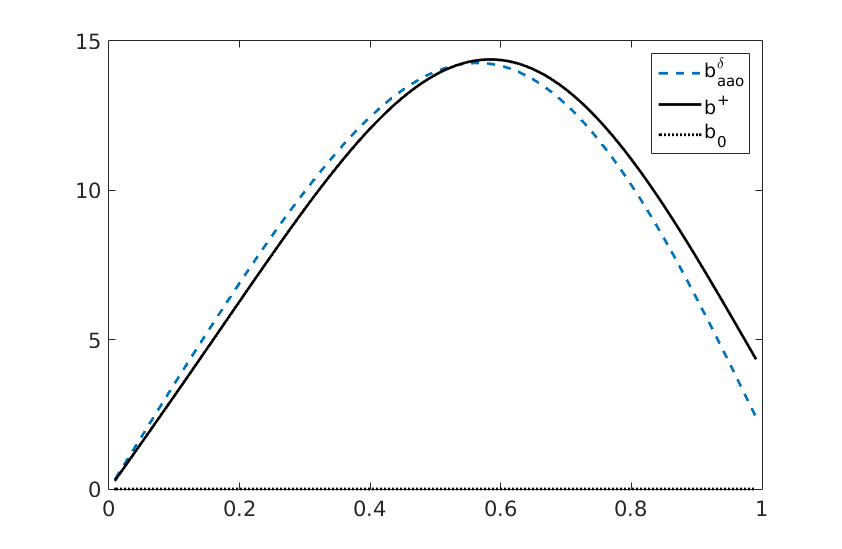}&
\includegraphics[width=0.3\textwidth]{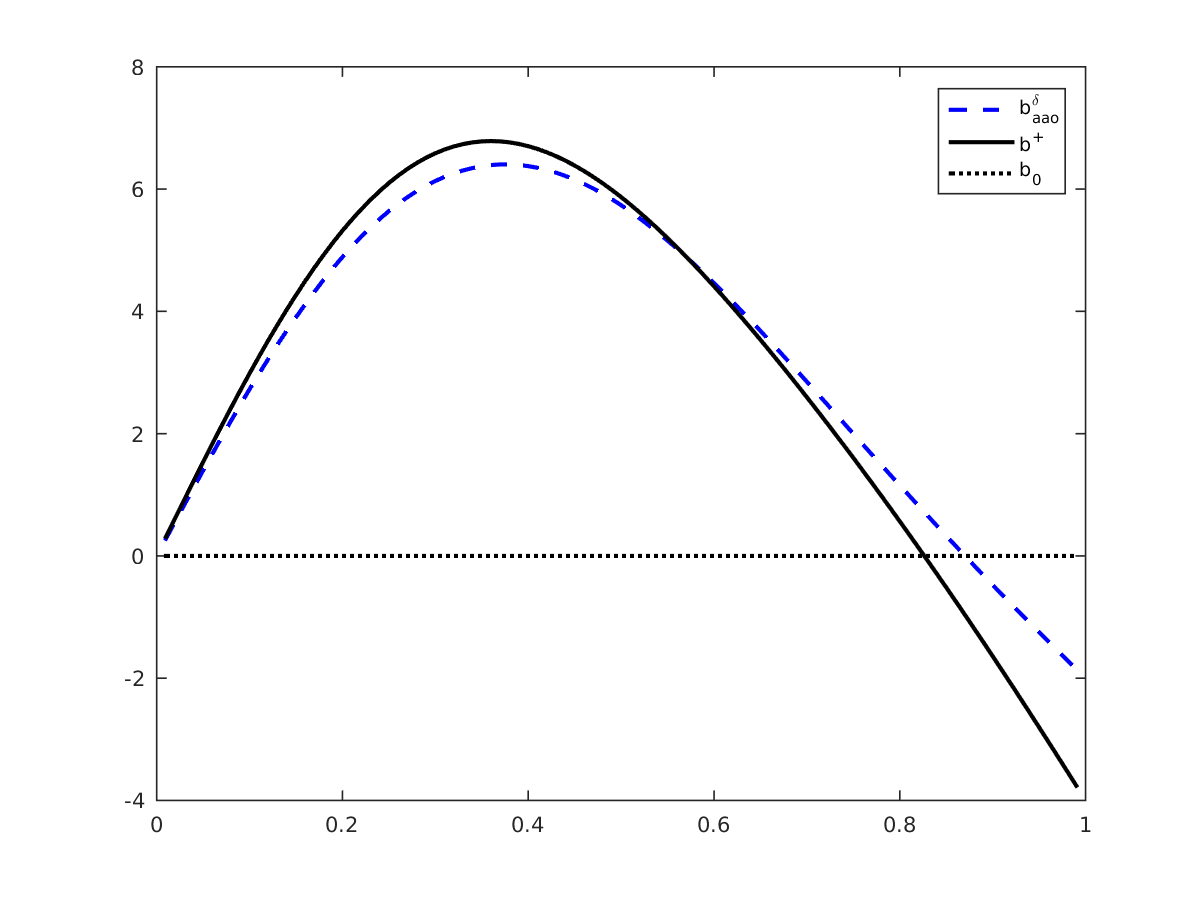}
\\
\includegraphics[width=0.3\textwidth]{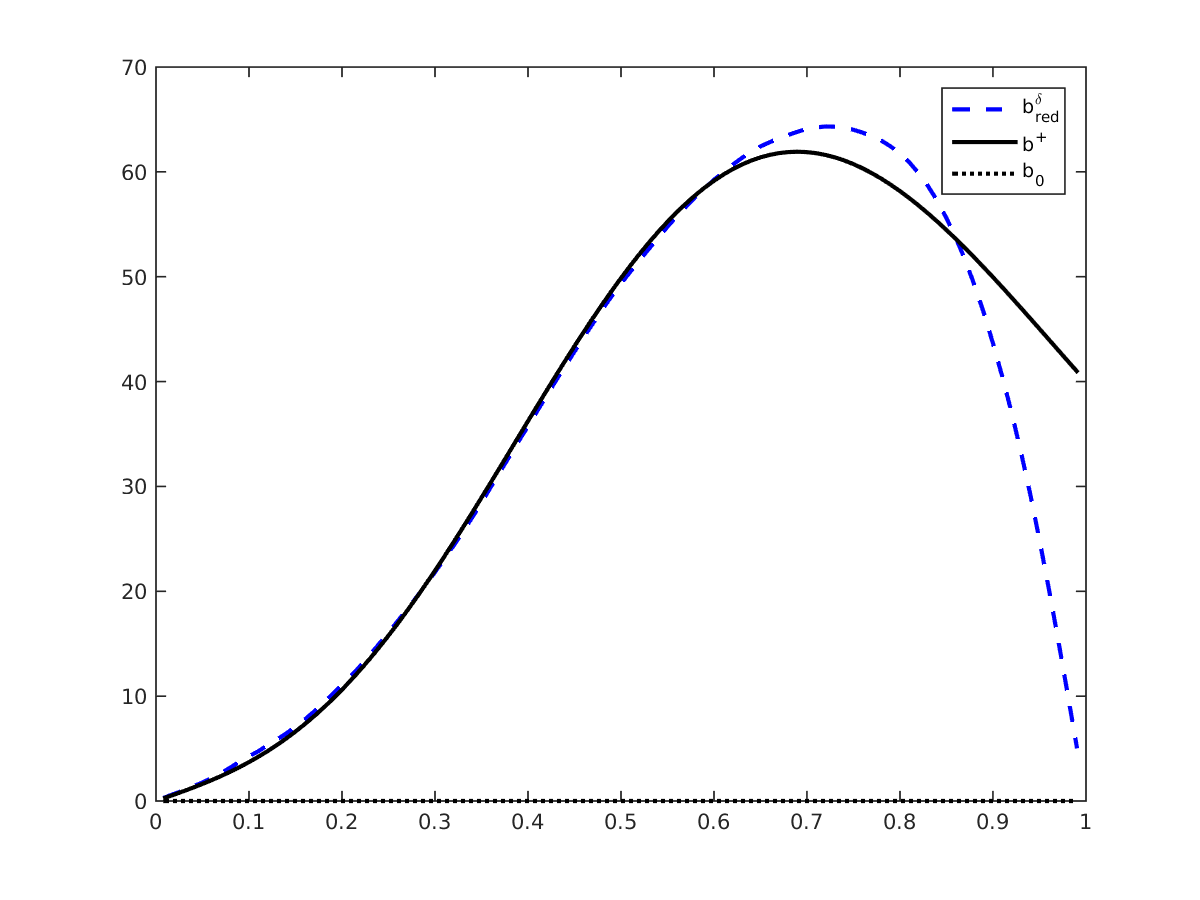}&
\includegraphics[width=0.3\textwidth]{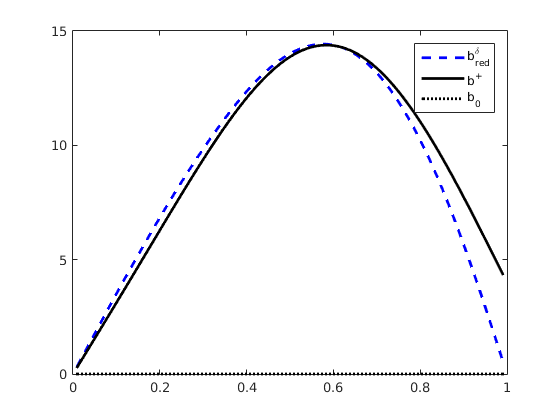}&
\includegraphics[width=0.3\textwidth]{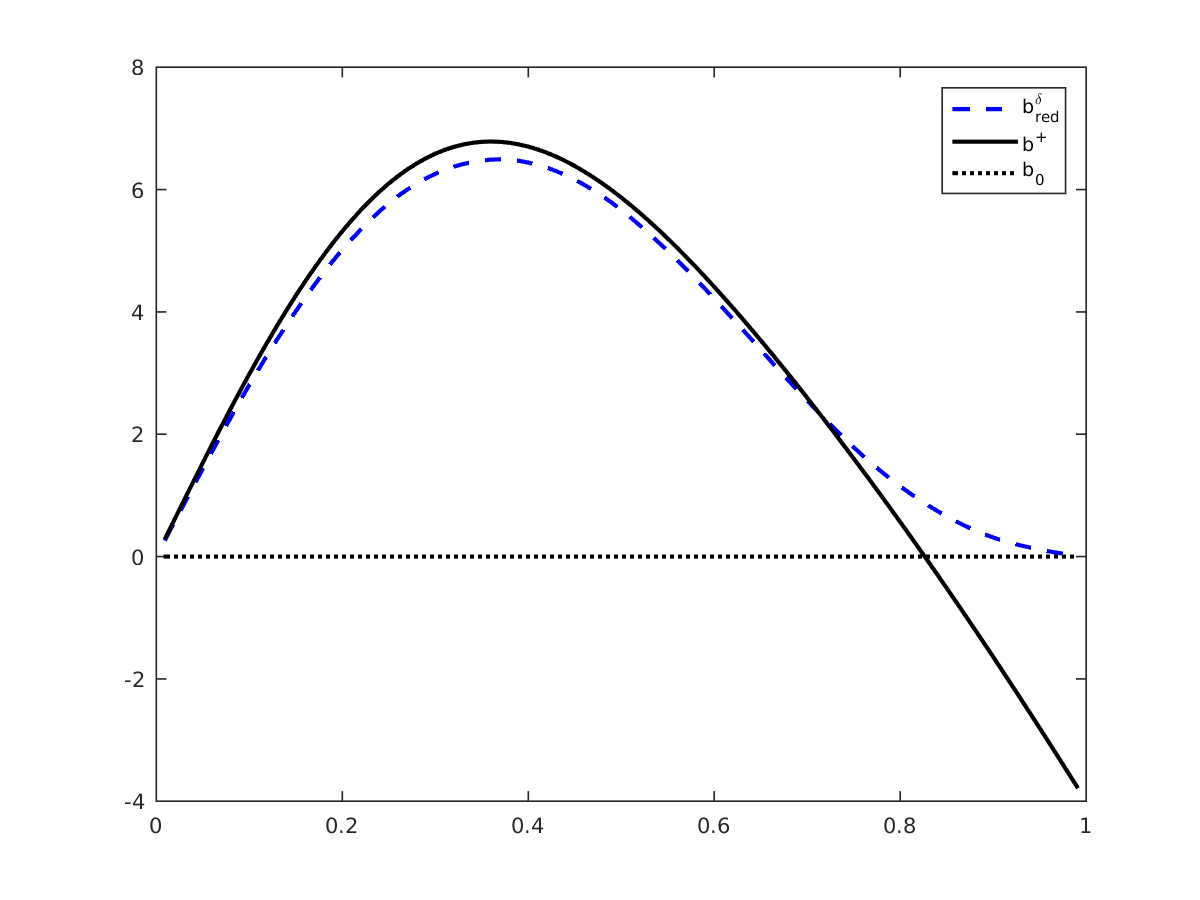}
\end{tabular}
\caption{Comparison of reduced (bottom row) and all-at-once (top row) Landweber \label{fig:compLW}}
\end{figure}

\section{Further remarks on the comparison between all-at-once and reduced version}\label{sec:rem}
\subsection{Comparison of conditions on forward operator}
Assumptions \ref{ass:compactXV}, \ref{ass:bfFweaklyclosed}, and \ref{ass:Lipschitz} cannot be directly compared with their respective reduced versions in the sense that one of them would imply the other. They have to be checked on a case by case basis, which is beyond the scope of this paper but will be subject of future research, e.g., for the examples from the introduction.

As far as the more structural conditions on nonlinearity of the forward operator (tangential cone condition as well as range invariance and adjoint range invariance) are concerned, they seem to be quite different in the reduced and in the all-at-once setting. For instance, both conditions have been proven to hold for certain special cases of Example \ref{ex1} in the reduced setting see, e.g., \cite{EKN89,HaNeSc95,KalNeuSch08} but they do not seem to be rigorously verifiable for these cases in the all-at-once setting. To demonstrate this for the adjoint range invariance condition \eqref{eq:adjrangeinvar} we consider the two special cases 
\begin{itemize}
\item[(i)]
identify $c$, while $a\equiv1$, $b\in H^{-1}(\Omega)$ given, (the so-called $c$ problem), $d\in\{1,2,3\}$;
\item[(ii)]
identify $a$, while $b\in H^{-1}(\Omega)$, $c\equiv0$ given (the so-called $a$ problem), $d=1$;
\end{itemize}
with full observations, i.e., $C$ the embedding operator $V\to Y$.
Note that these are exactly the cases in which the analogous adjoint range invariance condition can be verified for the reduced formulation.

For simplicity of exposition we switch to inhomogeneous Dirichlet instead of Neumann boundary conditions, i.e., 
\[
-\nabla (a\nabla \hat{u})+c\hat{u}=b \mbox{ in }\Omega\,, \quad \hat{u}=g_\Gamma\mbox{ on }\partial\Omega\,,
\]
which with the harmonic extension $g\in H^1(\Omega)$ of the inhomogeneous Dirichlet boundary data $g_\Gamma$ to the interior and $\hat{u}=u+g$, can be formulated variationally as
\[
u\in H_0^1(\Omega) \mbox{ and for all }v\in H_0^1(\Omega)\,:\ \int_\Omega\Bigl( a\nabla(u+g)\nabla v+c(u+g)v\Bigr)=0
\]
where we assume $g_\Gamma$ to be chosen such that 
\[
|u+g|\geq \hat{c}>0 \mbox{ in }\Omega \mbox{ in case (i)  and }
|(u+g)'|\geq \hat{c}>0 \mbox{ in }\Omega \mbox{ in case (ii)}
\]
for all $(x,u)\in\calD(\bfF) \cap \calB_\varrho(x_0,u_0)$, a condition which can be incorporated into the definition of $\calD(\bfF)$.

For the $c$ problem (i) with $x=c$ we make use of higher elliptic regularity and consider the function space setting
\[
X=L^\infty(\Omega)\,, \ V=H^2(\Omega)\cap H_0^1(\Omega)\,, \ W=L^2(\Omega)\,, \ Y= L^p(\Omega)
\]
with $p\in[1,\infty]$.
We have $A_x(x,u)h=h(u+g)$, $A_u(x,u)v=-\Delta v + xv$ and set
\[
\begin{aligned}
&R_{11}w^*=\frac{\tilde{u}+g}{u+g}w^*
&&\quad R_{12}y=\frac{u-\tilde{u}}{u+g}(-\Delta y)+\frac{\tilde{x}(u+g)-x(\tilde{u}+g)}{u+g} y\\
&R_{21}=0&&\quad R_{22}=I 
\end{aligned}
\]
for $w^*\in L^2(\Omega)$, $y\in L^p(\Omega)$.
to formally obtain \eqref{eq:adjrangeinvar}.
It remains to bound the difference $\|\bfR_{(x,u)}^{(\tilde{x},\tilde{u})}-I\|=\sqrt{\|R_{11}-I\|_{W^*\to W^*}^2+\|R_{12}\|_{Y\to W^*}}$, which we do by estimating
\[
\begin{aligned}
\|R_{11}w^*-w^*\|_{W^*}&=\left\|\frac{\tilde{u}-u}{u+g}w^*\right\|_{L^2(\Omega)}
\leq \left\|\frac{\tilde{u}-u}{u+g}\right\|_{L^\infty(\Omega)}\|w^*\|_{L^2(\Omega)}\\
&\leq \frac{C_{H^2\to L^\infty}}{\hat{c}} \|\tilde{u}-u\|_V \|w^*\|_{W^*}
\end{aligned}
\]
for any $w^*\in L^2(\Omega)$ and 
\[
\begin{aligned}
\|R_{12}y\|_{W^*}&=\left\|\frac{u-\tilde{u}}{u+g}(-\Delta y)+\frac{\tilde{x}(u+g)-x(\tilde{u}+g)}{u+g} y\right\|_{L^2(\Omega)}
\end{aligned}
\]
for any $y\in L^p(\Omega)$, which we are supposed to estimate by a small multiple  of $\|y\|_{L^p(\Omega)}$.
However, the appearance of $-\Delta y$ under the $L^2(\Omega)$ norm indicates that although the identity \eqref{eq:adjrangeinvar} formally holds, the required estimate on $\|R-I\|$ in \eqref{eq:ari} does not seem to be obtainable.

For the 1-d $a$ problem (ii) with the somewhat more convenient boundary conditions 
\[
-(a\hat{u}')'=0\mbox{ on }(0,1)\,,\quad \hat{u}'(0)=g^1\,, \quad \hat{u}(1)=g_1\,,  
\]
and under the assumption that $a$ takes the known value $a_0$ at the left hand boundary point, we use the extension $g(s)=g^1 (s-1)+g_1$, $s\in(0,1)$ and the Ansatz $a=a_0+x$, $\hat{u}=u+g$. With 
\begin{align*}
&X=\{x\in W^{1,q}(0,1)\, : \ x(0)=0\}\,,\quad V=\{x\in W^{2,q}(0,1)\cap H^1(0,1)\, : \ v'(0)=0\}\,,\\ 
&W^*=L^q(0,1)\,, \quad Y=L^p(0,1)\,, 
\end{align*}
where the prescribed boundary values are  to be understood in a trace sense and $p\in[1,\infty]$, $q\in(1,\infty)$,
$A(x,u)=-(a_0+x)u''-x'(g^1+u')$, $A_x(x,u)h=-hu''-h'(g^1+u')$, $A_u(x,u)v=-(a_0+x)v''-x'v'$ and setting
\[
\begin{aligned}
&R_{11}w^*=\left(\frac{(\tilde{u}+g)'}{(u+g)'}\int_0^\cdot w^*\,ds\right)'
\hspace*{-0.1cm}&& R_{12}y=-\left(\frac{(a_0+\tilde{x})(u+g)'-(a_0+x)(\tilde{u}+g)'}{(u+g)'}y')\right)'\\
&R_{21}=0
&&R_{22}=I 
\end{aligned}
\]
we formally satisfy \eqref{eq:adjrangeinvar}.
Also here the estimate of the $R_{11}-I$ term of $\bfR-I$ works out
\[
\begin{aligned}
&\|R_{11}w^*-w^*\|_{W^*}=\left\| 
\left(\frac{\tilde{u}''(g^1+u')-u''(g^1+\tilde{u}')}{(g^1+u')^2}\int_0^\cdot w^*\,ds\right)' 
+ \frac{\tilde{u}'-u'}{g^1+u'}w^*
\right\|_{L^q(0,1)}\\
&\leq\Bigl(\frac{1}{\hat{c}}\|\tilde{u}''-u''\|_{L^q(0,1)} +
\frac{1}{\hat{c}^2}\|u''\|_{L^q(0,1)} \|\tilde{u}'-u'\|_{L^\infty(0,1)}\Bigr)\|w^*\|_{L^1(0,1)}\\
&\quad+\frac{1}{\hat{c}}\|\tilde{u}'-u'\|_{L^\infty(0,1)} \|w^*\|_{L^q(0,1)}\,,
\end{aligned}
\]
which by embeddings can be estimated by some constant times $\|\tilde{u}-u\|_V \|w^*\|_{W^*}$,
whereas $R_{12}y$ contains derivatives of $y$ that prevent an estimate of its $L^q(0,1)$ norm by a small multiple of $\|y\|_{L^p(0,1)}$.

\subsection{Comparison of source conditions in reduced and all-at-once setting}
As a consequence of Proposition \eqref{prop:equivTikh} we expect the source condition \eqref{sc_aao_u} to be equivalent for the reduced and the all-at-once version of Tikhonov if $m=1$ (and $\rho$ is large enough). However, for $m>1$, at a first glance there might be a difference, 
so we now consider the special case of $m=o=r=2$ in a Hilbert space setting and compare the reduced and the all-at-once benchmark source condition, i.e., the one yielding $O(\sqrt{\delta})$ convergence of the error norm in both versions.
The reduced source condition 
\[
\xdag-x_0= R_XF'(\xdag)^* v 
\]
for some $v\in Y^*$ with the abbreviations $L=A_x(\xdag,\udag)$, $K=A_u(\xdag,\udag)$ and the Riesz isomorphism $R_X:X^*\to X$ is equivalent to
\begin{equation}\label{sc_red}
\xdag-x_0= -R_X(K^{-1}L)^* C'(\udag)^* v
\end{equation}
whereas the  all-at-once version \eqref{sc_aao_u}
\begin{equation}\label{sc_aao}
\left(\begin{matrix}\xdag-x_0\\\udag-u_0\end{matrix}\right)= 
R_{X\times V}\mathbf{F}'(\xdag,\udag)^* 
\left(\begin{matrix}\vmod\\ \vobs\end{matrix}\right)
= \left(\begin{matrix}R_X L^* \vmod\\
R_V(K^* \vmod+C'(\udag)^* \vobs)\end{matrix}\right)\\
\end{equation}
for some $(\vmod,\vobs)\in W\times Y^*$ after elimination of $\vmod$ (relying on Assumption \ref{ass:A_u}) and using $\udag=S(\xdag)$, $-K^{-1}L=S'(\xdag)$ can be rewritten as
\begin{equation}\label{sc_aao_elim}
\begin{aligned}
&\vmod=(K^*)^{-1}\Bigl(R_V^{-1}(\udag-u_0)-C'(\udag)^*  \vobs\Bigr)\\
&\xdag-x_0+R_X((K^{-1}L)^* R_V^{-1}(K^{-1}L\xdag+u_0)=-R_X(K^{-1}L)^* C'(\udag)^*  \vobs\,.
\end{aligned}
\end{equation}
In the linear case $A_x\equiv L$, $A_u\equiv K$, with $S=-K^{-1}L$, and the positive definite operator $T=I+R_XS^*R_V^{-1}S$ 
the latter is equivalent to 
\begin{equation}\label{sc_aao_lin}
T(\xdag-x_0)= -R_X(K^{-1}L)^* (C'(\udag)^* \vobs+R_V^{-1}(K^{-1}Lx_0+u_0))\,.
\end{equation}
Thus, replacing the linear model $Ku+Lx=0$ by its transformed version $Ku+LT^{-1}\tilde{x}=0$, we see that both source conditions \eqref{sc_red} and \eqref{sc_aao_lin} are equivalent in the Hilbert space case (even with possibly nonlinear observations) provided the initial point satisfies the model. This is actually not surprising in view of well-known converse results for linear inverse problems cf., \cite[Section 4.2]{EHNbuch96} and the fact that, e.g., the respective version of Tikhonov regularization yields the same $O(\sqrt{\delta})$ convergence rate under both conditions.

However, there might still be a considerable difference in the nonlinear and/or Banach space case setting with variational source conditions.
This conjecture is supported by the observation that the nonconvergence at the right endpoint of the interval for the respective reduced version in case $\xi=-0.5$ of Figure \ref{fig:compIRGNM} and cases $\xi=\pm0.5$ of Figure \ref{fig:compLW} (as expected since the difference between $\xdag$ and $x_0$ in this point is known to lead to violation of the reduced benchmark source condition) seems to be relaxed in the all-at-once versions.

\section{Conclusions and Outlook}\label{sec:concl}
All-at- once versions of regularization methods can offer advantages over their classical reduced counterparts when it comes to avoiding explicit use of parameter-to-state maps, i.e., of exactly solving possibly nonlinear models in each step  of iterative methods. More precisely, while there is no significant difference in the implementation of all-at-once and reduced Tikhonov regularization, in reduced Newton iterations one has to  solve nonlinear and linear models in each step, while an all-at-once Newton step only requires to solve linearized models. Still going further, as opposed to reduced Landweber, which amounts to solving a nonlinear and an adjoint linear model in each step, there is no model solved at all in an all-at-once Landweber step.

It remains to more thoroughly compare source conditions and restrictions on the nonlinearity like tangential cone and (adjoint) range invariance  conditions for some relevant model problems and for real applications.
Moreover we will investigate more general data misfit and regularization functionals, as well as other regularization paradigms.

\end{document}